\numberwithin{equation}{section}
\numberwithin{figure}{section}
\theoremstyle{plain}
\newtheorem {theorem}{Theorem}[section]
\newtheorem {lemma}[theorem]{Lemma}
\newtheorem {prop}[theorem]{Proposition}
\newtheorem {kor}[theorem]{Corollary}
\theoremstyle{definition}
\newtheorem {definition}[theorem]{Definition}
\theoremstyle{remark}
\newtheorem {remark}[theorem]{Remark}
\begin{document}

\date{}

\title{\Large {\bf A necessary condition for the tightness of odd-dimensional combinatorial manifolds}}

\author{Jonathan Spreer\thanks{School of Mathematics and Physics, The University of Queensland, Brisbane QLD
4072, Australia. j.spreer@uq.edu.au}}

\maketitle

\subsection*{\centering Abstract}

{\em
We present a necessary condition for $(\ell-1)$-connected combinatorial 
$(2\ell +1)$-manifolds to be tight.
As a corollary, we show that there is no tight combinatorial three-manifold
with Betti number at most two other than the boundary of the four-simplex and
the nine-vertex triangulation of the three-dimensional Klein bottle.
}

\medskip
\noindent
\textbf{MSC 2010: } 
{\bf 57Q15};  
57N10; 
05A19 

\medskip
\noindent
\textbf{Keywords: } combinatorial manifolds, rsl-functions, slicings, tightness, tight triangulations, $\sigma$-vector

\section{Introduction}
\label{sec:prelim}

Tight combinatorial manifolds are rare but very special objects.
There are strong necessary conditions on when a combinatorial manifold
can be tight and it is conjectured that all tight combinatorial manifolds
are strongly minimal triangulations \cite[Conjecture 1.3]{Kuehnel99CensusTight}.

On the other hand, given a combinatorial manifold $M$ it is difficult to
check in general whether or not $M$ is tight. One way to do this would
be to look at all regular simplex-wise linear functions on $M$ and
check if they all have the minimum number of critical points, i.e.,
if they are all {\em perfect}, see \cite{Bagchi14StellSpheresTightness} for an elaborate way
to do this.
As a consequence, necessary as well as sufficient conditions for tightness are
highly sought after.

\medskip
Here we establish new necessary conditions for the tightness
of odd-dimensional combinatorial manifolds by analysing topological
properties of slicings, i.e., co-dimension one normal sub-manifolds,
which do not depend on the topology of the surrounding manifold. 

As a result, we present
upper bounds on the number of vertices of a combinatorial manifold $M$ 
in terms of its Betti numbers, this way disqualifying large classes of 
topological manifolds from having tight triangulations at all.

In particular we prove the following result about $(\ell-1)$-connected
combinatorial $(2\ell +1)$-manifolds complementing the
results about $(\ell-1)$-connected combinatorial $2\ell$-manifolds
due to K\"uhnel \cite{Kuehnel95TightPolySubm}. 

\begin{theorem}
	\label{thm:dimOdd}
	Let $\mathbb{M}$ be an $\mathbb{F}$-orientable compact closed 
	$(\ell-1)$-connected $(2\ell+1)$-manifold
	admitting an $n$-vertex triangulation which is tight with respect to the field $\mathbb{F}$. Then
	\begin{equation}
		\label{eq:main}
		\beta_{\ell} (M,\mathbb{F}) = \beta_{\ell+1} (M,\mathbb{F}) \geq \left \lceil (-1)^{\ell+1} \frac{(1-\lfloor n/2 \rfloor)_{\ell+1} (1-\lceil n/2 \rceil)_{\ell+1}}{(\ell+1)! \,\, (1-n)_{\ell+1}} \right \rceil
	\end{equation}
	where $(a)_n = a \cdot (a+1) \cdot (a+2) \cdot \ldots \cdot (a+n-1)$ denotes the Pochhammer symbol. 
\end{theorem}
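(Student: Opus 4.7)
The plan is to extract topological and combinatorial constraints from slicings of $M$ induced by a generic rsl-function, and to bound $\beta_\ell(M,\mathbb{F})$ by counting faces of $M$ that straddle such a slicing. Note that by Poincar\'e duality and $(\ell-1)$-connectedness, all Betti data of $M$ is already encoded in the single number $\beta_\ell(M,\mathbb{F}) = \beta_{\ell+1}(M,\mathbb{F})$, so it is enough to produce a lower bound on $\beta_\ell$.

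First, I would fix a generic rsl-function $f: M \to \mathbb{R}$ with distinct vertex values $f(v_1) < \cdots < f(v_n)$; tightness ensures that $f$ is perfect. For each $k \in \{1,\ldots,n-1\}$, the slicing $H_k$ at a level strictly between $f(v_k)$ and $f(v_{k+1})$ is an $\mathbb{F}$-orientable closed $2\ell$-manifold splitting $M$ into a lower piece $M^-_k$ on $v_1,\ldots,v_k$ and an upper piece $M^+_k$ on $v_{k+1},\ldots,v_n$; each $d$-face of $H_k$ corresponds to a unique $(d+1)$-face of $M$ with vertices on both sides. Combining $(\ell-1)$-connectedness with perfectness of $f$, the Mayer-Vietoris sequence for $M = M^-_k \cup_{H_k} M^+_k$ forces $\beta_i(H_k,\mathbb{F}) = 0$ for $0 < i < \ell$ and links $\beta_\ell(H_k,\mathbb{F})$ linearly to $\beta_\ell(M,\mathbb{F})$ via the handle counts of $M^{\pm}_k$.

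The combinatorial heart is to estimate $f_\ell(H_k)$ from two directions. A Dehn-Sommerville-type identity on the $2\ell$-manifold $H_k$ yields a lower bound of the form $f_\ell(H_k) \geq c_\ell\, \beta_\ell(H_k)$, while tightness and $(\ell-1)$-connectedness of $M$ force enough neighbourliness that the number of $(\ell+1)$-faces of $M$ with $i$ vertices below and $\ell+2-i$ above level $k$ can be read off directly from binomial counts of vertex subsets; this produces an upper bound on $f_\ell(H_k)$ that peaks for $k \in \{\lfloor n/2\rfloor, \lceil n/2\rceil\}$. Comparing the two bounds should reproduce the ratio $\binom{\lfloor n/2\rfloor - 1}{\ell+1}\binom{\lceil n/2\rceil - 1}{\ell+1}/\binom{n-1}{\ell+1}$, which is equivalent to the Pochhammer expression in (\ref{eq:main}); the ceiling arises because $\beta_\ell$ is integer-valued.

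The main obstacle I anticipate is the passage from the middle Betti number of $H_k$ back to $\beta_\ell(M,\mathbb{F})$: the pieces $M^{\pm}_k$ are not closed manifolds, and controlling their $\ell$-homology requires the full strength of tightness rather than just its combinatorial shadow. Getting the constants exactly right --- in particular the ``$-1$'' inside each binomial, which suggests fixing a distinguished vertex on either side of the slicing and reducing to an $(n-1)$-vertex count --- will likely require a careful choice of slicing level $k$ and possibly an averaging argument over pairs of slicings, rather than a single direct count at one level.
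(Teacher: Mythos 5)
Your proposal correctly identifies several right ingredients --- slicings, the role of neighbourliness, the fact that the extremal choice is $k\approx n/2$, and even the correct closed-form $\binom{\lfloor n/2\rfloor-1}{\ell+1}\binom{\lceil n/2\rceil-1}{\ell+1}\big/\binom{n-1}{\ell+1}$, which is indeed the same as the Pochhammer expression in (\ref{eq:main}) --- but the mechanism you propose runs in the wrong direction. Sandwiching $f_\ell(H_k)$ between a Dehn--Sommerville-type lower bound $f_\ell(H_k)\ge c_\ell\,\beta_\ell(H_k)$ and a binomial upper bound from straddling faces yields an \emph{upper} bound on $\beta_\ell(H_k)$. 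But the link you correctly anticipate from Proposition~\ref{prop:injHom} is $\beta_\ell(H_k)\le 2\beta_\ell(M)$, so to lower-bound $\beta_\ell(M)$ you need a \emph{lower} bound on $\beta_\ell$ of some slicing, which your sandwich cannot deliver. There are also smaller inaccuracies: a slicing need not be connected or $\mathbb{F}$-orientable, and Mayer--Vietoris plus $(\ell-1)$-connectedness of $M$ gives $\beta_i(H_k)=0$ only for $0<i<\ell-1$, not up to $\ell$; $\beta_{\ell-1}(H_k)$ can be as large as $\beta_\ell(M)$.

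The missing idea is the averaging argument over \emph{all} slicings at a fixed level $k$ (Proposition~\ref{prop:avgf}), not a single slicing from one rsl-function and not ``pairs.'' Tightness forces $(\ell+2)$-neighbourliness, and the Upper Bound Theorem then pins down the entire $f$-vector of $M$ to that of the cyclic $(2\ell+2)$-polytope boundary. For each $k$ the average Euler characteristic $\bar\chi_k$ over all $\binom{n}{k}$ slicings with $k$ vertices on one side is a closed combinatorial expression in that $f$-vector alone --- no Dehn--Sommerville relations on the slicing are needed, only the definition of $\chi$. Since $\chi(S)\ge 2-\beta_\ell(S)$ for $\ell$ odd (resp.\ $\chi(S)\le 2+\beta_\ell(S)$ for $\ell$ even), some slicing must have $\beta_\ell(S)\ge(-1)^{\ell+1}(2-\bar\chi_k)$, whence $\beta_\ell(M)\ge(-1)^{\ell+1}(1-\bar\chi_k/2)$ by Proposition~\ref{prop:injHom}; this is a genuine lower bound, in the direction you need. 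The remaining work is pure summation: Lemma~\ref{lem:sums} (Chu--Vandermonde and Pfaff--Saalsch\"utz) telescopes the double sum defining $\bar\chi_k$ into the Pochhammer quotient, which is then maximised at $k=\lfloor n/2\rfloor$. Without this averaging step your outline has no route from the combinatorics of $M$ to a lower bound on the middle Betti number of some slicing, so the argument as written does not close.
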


As of today, the known cases of equality in (\ref{eq:main}) are the boundary of the simplex ($\ell \geq 1$, $\beta_{\ell} = 0$) and
the $13$-vertex triangulation of $SU(3) / SO(3)$ ($\ell = 2$ and $\beta_{\ell} = 1$).

\medskip
As a direct consequence any ($\mathbb{F}$-)tight connected combinatorial three-manifold $M$ with $\beta_1 (M,\mathbb{F}) \leq 2$ 
cannot have more than $12$ vertices. Together with further results presented in Section~\ref{sec:sigma} 
and extended computer experiments this leads to the following.

\begin{kor}
	\label{kor:main}
	The boundary of the simplex and the nine-vertex three-dimensional Klein Bottle $S^2 \dtimes S^1$ are the only
	tight combinatorial three-manifolds with first Betti number at most two.
\end{kor}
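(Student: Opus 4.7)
\medskip
\noindent\textbf{Proof plan.}
The strategy is to combine the vertex bound implied by Theorem~\ref{thm:dimOdd} with the additional obstructions coming from the $\sigma$-vector analysis of Section~\ref{sec:sigma}, and then to settle the remaining finitely many cases by a direct computer enumeration.

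First I would specialise Theorem~\ref{thm:dimOdd} to $\ell = 1$. Evaluating the right-hand side of (\ref{eq:main}) for small $n$ shows that the ceiling expression is monotone non-decreasing in $n$ and already reaches the value $3$ at $n = 13$. Hence any tight combinatorial three-manifold with $\beta_1 \leq 2$ must have at most $12$ vertices. A finer reading of the same bound forces $\beta_1 = 0$ to occur only at $n = 5$ (so $\partial \Delta^{4}$ is the unique candidate) and $\beta_1 = 1$ to occur only for $n \leq 10$, leaving $\beta_1 = 2$ as the only option for $n \in \{11,12\}$.

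Second, I would invoke the tightness obstructions derived from the $\sigma$-vector in Section~\ref{sec:sigma}, together with the fact that tightness implies neighbourliness in this connected setting. This should cut the list of $(n,\beta_1,f\text{-vector})$-triples admissible on at most $12$ vertices down to a short explicit list of candidates.

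Finally, I would compare the surviving admissible parameters against the census of closed combinatorial three-manifold triangulations with at most $12$ vertices, and test each remaining candidate by searching for a regular simplex-wise linear function which fails to be perfect. The only triangulations passing all of these filters should be $\partial \Delta^{4}$ and the nine-vertex triangulation of $S^{2} \dtimes S^{1}$.

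The main obstacle is the computer search in the last step: even after the $\sigma$-vector filter, enumerating three-manifold triangulations with $11$ or $12$ vertices is heavy, and certifying tightness (as opposed to non-tightness) requires ruling out \emph{every} non-perfect rsl-function rather than checking a single distinguished one. The success of the plan therefore hinges on the $\sigma$-vector filter being strong enough to leave only a handful of candidates for which the tightness question can be decided either by a known construction or by an explicit witness.
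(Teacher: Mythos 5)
Your high-level structure (vertex bound from Theorem~\ref{thm:dimOdd}, then Section~\ref{sec:sigma} obstructions, then a finite search) matches the paper, and your specialisation of~(\ref{eq:main}) to $\ell=1$ is correct: $\beta_1=0$ forces $n=5$, $\beta_1=1$ forces $n\in\{9,10\}$, and $\beta_1=2$ forces $n\in\{11,12\}$. The $n=5$ case is dispatched by three-neighbourliness, and $n=11$ is settled by the census of $11$-vertex combinatorial three-manifolds, which contains nothing with $\beta_1>1$. So far so good.

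The genuine gap is in your last step for $n=12$: you propose to \emph{``compare the surviving admissible parameters against the census of closed combinatorial three-manifold triangulations with at most $12$ vertices.''} No such census is available (the existing classification stops at $11$ vertices), and a brute-force enumeration of $12$-vertex two-neighbourly three-manifolds is exactly what the paper is designed to avoid. The paper instead works one dimension down, at the level of candidate \emph{vertex links}: a tight $12$-vertex three-manifold has $11$-vertex two-sphere links, and there are only $1249$ such sphere triangulations. Proposition~\ref{prop:bd} constrains the \emph{average} $\sigma_0$ over the twelve links to equal $\beta_1-1=1$, Property $T_2$ eliminates links that would immediately force a non-perfect rsl-function, and a parity condition on vertex degrees (each degree must occur an even number of times across all links) cuts the admissible link combinations down to eight. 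Only then does a targeted backtracking search, which assembles a three-manifold by gluing vertex stars along edge links, finish the job. This link-level reduction is the essential idea you are missing; without it, ``enumerate $12$-vertex three-manifolds'' is not a step you can actually carry out. Also note that your stated worry about ``certifying tightness'' is not where the difficulty lies: the $n=9$ Klein bottle is already known to be tight, and for $n=10,12$ the task is to show \emph{no} tight example exists, for which the $\sigma$-vector criterion (Proposition~\ref{prop:bd}) is an if-and-only-if, so one never has to check all rsl-functions by hand. Indeed, for $n=10$ the paper disposes of the whole case purely numerically, because every $9$-vertex two-sphere has $\sigma_0>0$ so Equation~(\ref{eq:bd}) cannot average to $0$.
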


\subsection*{Acknowledgement}

This work was supported by DIICCSRTE, Australia, under the Australia-India Strategic 
Research Fund (project AISRF06660). Furthermore, the author wants to thank Ole Warnaar 
for introducing him to hypergeometric sums and for the proof of Lemma~\ref{lem:sums}.

\section{Preliminaries}
\label{sec:prelim}

\subsection{Combinatorial manifolds}

A combinatorial $d$-manifold $M$ is an abstract pure simplicial complex of dimension
$d$ such that all vertex links are triangulated standard $PL$-spheres. The {\it $f$-vector} 
of $M$ is a $(d+1)$-tuple $f(M)=(f_0, f_1, \ldots f_d)$
where $f_i$ denotes the number of $i$-dimensional faces of $M$. The 
zero-dimensional faces of $M$ are called {\em vertices}, the one-dimensional faces
are called {\em edges} and the $d$-dimensional faces are referred to as {\em facets}. 
The set of vertices of $M$ will be denoted by $V(M)$ or just $V$ if $M$ is given by the context.

We call $M$ {\it $k$-neighbourly}, if $f_{k-1} = { f_0 \choose k }$, i.e., 
if it contains all possible $(k-1)$-dimensional faces. 
An $n$-vertex combinatorial $d$-manifold $M$ distinct from the boundary of the $(d+1)$-simplex
can be at most $(\lfloor \frac{d+2}{2} \rfloor)$-neighbourly. In this case 
the $f$-vector of an odd-dimensional combinatorial manifold $M$ 
is already determined to be the one of the boundary complex
of the (even-dimensional) cyclic $(d+1)$-polytope with $n$ vertices. This statement
is known as the {\em Upper Bound Theorem} due to Novik \cite{Novik98UBTHomMnf} and Novik and Swartz
\cite{Novik08SocBuchsMod}.

Given a combinatorial manifold $M$ with vertex set $V(M)$ and $W \subset V(M)$, the simplicial complex
$$ M[W] = \{ \sigma \in M \,|\, V(\sigma) \subset W \}, $$
i.e., the simplicial complex of all faces of $M$ with vertex set in $W$, 
is called the {\em sub-complex of $M$ induced by $W$}.
 
\subsection{Tightness}

{\em Tightness} is a condition on subsets of Euclidean space generalising
the notion of convexity: an object is tight if it is ``as convex as possible'', i.\ e.,
as simple as possible, given its topological constraints. More precisely we have
the following.

\begin{definition}[Tightness \cite{Kuehnel95TightPolySubm}]
	\label{def:tightness}
	A compact connected subset $M \subset E^d$ is called 
	{\em $k$-tight} with respect to a field $\mathbb{F}$ if for every open 
	or closed half space $h \subset E^d$ the induced homomorphism
	\begin{equation*} 
		H_{k} (h \cap M, \mathbb{F}) \to H_{k}(M,\mathbb{F}) 
	\end{equation*}
	is injective. If $M \subset E^d$ is $k$-tight with respect to $\mathbb{F}$ 
	for all $k$, $0 \leq k \leq d$,	it is called {\em tight}.
\end{definition}

Here and in the following $H_{\star}$ denotes an appropriate homology theory (i.e., simplicial
homology for our purposes).
An $n$-vertex combinatorial manifold $M$ with vertex set $V = \{ v_1 , \ldots , v_n \}$ 
is said to be {\em tight} if and only
if its canonical embedding
	$$ i: M \to \mathbb{R}^n ; \qquad \qquad v_i \mapsto e_i $$
is tight with respect to at least one field $\mathbb{F}$. 

Alternatively, there is the following
combinatorial definition of tightness for combinatorial manifolds or even arbitrary abstract simplicial complexes.

\begin{definition}[Tightness \cite{Kuehnel95TightPolySubm,Bagchi14StellSpheresTightness}]
	Let $M$ be a combinatorial manifold with vertex set $V(M)$ and let $\mathbb{F}$ be a field. We say
	that $M$ is tight with respect to $\mathbb{F}$ if {\em (i)} $M$ is connected, 
	and {\em (ii)} for all subsets $W \subset V(M)$ $N$ and for all $0 \leq k \leq d$
	the induced homomorphism 
	$$ H_k (M[W], \mathbb{F}) \to H_k (M, \mathbb{F}) $$
	is injective.
\end{definition}

\subsection{Rsl-functions and slicings}
\label{ssec:rsl}

Various discretisations of the concept of Morse theory provide 
important and powerful tools to investigate combinatorial manifolds. 
One of these is based on the following construction.

\begin{definition}[Rsl-function \cite{Kuehnel95TightPolySubm}]
	\label{def:rsl}
	Let $M$ be a combinatorial $d$-manifold. A function $g:M \to \mathbb{R}$ 
	is called \textit{regular simplexwise linear (rsl)} if $g(v) \neq g(w)$ 
	for any two vertices $w \neq v$ of $M$, and $g$ is linear when 
	restricted to a simplex of $M$.
\end{definition}

A point $x \in M$ is said to be {\it critical of index $i$} for an rsl-function $g:M \to \mathbb{R}$ 
if \[H_{i} (M_x , M_x \backslash \{ x \} , \mathbb{F}) \neq 0 \] where 
$M_x := \{ y \in M \, | \, g(y) \leq g(x) \}$ and $\mathbb{F}$ is a field. 
Furthermore, a critical point of index $i$ is said to have {\em multiplicity $m$} if
$m = \operatorname{rk} H_i (M_x , M_x \backslash \{ x \} , \mathbb{F})$.
It follows that no point of $M$ can be critical except possibly the vertices.

Using the notion of rsl-functions and critical vertices as defined above
discrete analogues of the principal results of classical Morse 
can be obtained for combinatorial manifolds 
\cite{Effenberger09StackPolyTightTrigMnf,Kuehnel95TightPolySubm}.
In particular, for any field $\mathbb{F}$ the sum of all critical points of an 
rsl-function $g: M \to \mathbb{R}$ counted
with multiplicity is greater or equal to the sum of all Betti numbers $\beta_i (M,\mathbb{F})$,
$0 \leq i \leq d$, and $g$ is called {\em perfect} in the case of equality.
Moreover, we have the following:

\begin{definition}[Slicing]
	\label{def:slicing}
	\noindent
	Let $M$ be a combinatorial $d$-manifold, 
	$g:M \to \mathbb{R}$ an rsl-function, and $x \in \mathbb{R}$ such that $x \not \in f(v)$ for
	any vertex $v \in V(M)$. Then the pre-image 
	$S(g,x) = g^{-1} (x) \subset M$ is referred to as a {\it slicing} of $M$.
\end{definition}

By construction, a slicing is a (polyhedral decomposition of a) $(d-1)$-manifold and for any ordered pair 
$x < y$ the slicing $g^{-1} (x)$ is isomorphic to $g^{-1} (y)$ whenever 
$g^{-1}([x,y])$ doesn't contain a vertex of $M$. In the following we will denote 
the class of isomorphic slicings between two adjacent vertices of $g$ 
by $S(g,v)$ where $v$ is the vertex with the largest $g$-value smaller than $x$.
It follows that any $n$-vertex
combinatorial manifold has exactly $2^{n-1}-1$ such classes of slicings (not that
the empty set is not counted as a slicing).

Let $S(g,v) \subset M$ be a slicing of a combinatorial manifold $M$, then
$M \setminus S(g,v)$ splits into two connected components $M^-$ and $M^+$
where $M^-$ denotes all points of $x \in M$ with $g(x) \leq g(v)$ and
$M^+$ denotes all $y \in M$ such that $g(y) > g(v)$.

We can now use the theory of slicings and rsl-functions to give yet another definition
of a tight combinatorial manifold.

\begin{definition}[Proposition 3.17 \cite{Kuehnel95TightPolySubm}]
	\label{def:tight3}
	A combinatorial manifold $M$ is tight with respect to some field $\mathbb{F}$
	if and only if all rsl-functions $M \to \mathbb{R}$ are perfect with respect to
	$\mathbb{F}$.
\end{definition}

\subsection{Hypergeometric sums}

Hypergeometric sums are a standard tool to prove combinatorial identities.
Here, we will focus on their definition and some basic properties necessary
to prove Theorem~\ref{thm:dimOdd}. For a more thorough introduction into the subject
see \cite{Andrews99SpecialFunctions,Bailey64GenHypSer}.

Given the {\it Pochhammer symbol} $(a)_n = a (a + 1) \ldots (a+n-1)$ the generalised
hypergeometric sum is defined as
$$ 
	_{r}F_{s} \left ( \begin{array}{c} a_1 , \ldots , a_r \\ b_1 , \ldots , b_s \end{array}; z \right ) = 
	\sum \limits_{n \geq 0} \frac{(a_1)_n \dots (a_r)_n}{(b_1)_n \dots (b_s)_n} \frac{z^n}{n!} ,
$$
and we have
\begin{equation}
\label{eq:1}
{a \choose n} = (-1)^n \frac{(-a)_n}{n!},
\end{equation}
and
\begin{equation}
\label{eq:2}
(a)_{2n} = 4^n (a/2)_n ((a+1)/2)_n .
\end{equation}
Furthermore, for any positive integer $n$ we have
\begin{equation}
\label{eq:chu}
_{2}F_{1} \left ( \begin{array}{c} a , -b \\ c \end{array}; 1 \right ) = \frac{(c-a)_b}{(c)_b},
\end{equation}
which is known as the {\em Chu-Vandermonde sum} \cite[Corollary 2.2.3]{Andrews99SpecialFunctions}; and
\begin{equation}
\label{eq:pfaff}
_{3}F_{2} \left ( \begin{array}{c} a, b, -c \\ d, 1 + a + b - c - d \end{array}; 1 \right ) = \frac{(d-a)_c (d-b)_c}{(d)_c (d-a-b)_c},
\end{equation}
which is referred to as the {\em Pfaff-Saalsch\"utz sum} \cite[Theorem 2.2.6]{Andrews99SpecialFunctions}.
We will use these identities in Section~\ref{sec:neighborly} to prove Theorem~\ref{thm:dimOdd}.

\section{Tightness and polyhedral critical point theory}
\label{sec:critPTheory}

Slicings and rsl-functions as explained in Section~\ref{ssec:rsl} are linked to the concept of tightness via 
Definition~\ref{def:tight3} and the following observation.
%

\begin{prop}
	\label{prop:injHom}
	Let $M$ be a combinatorial $d$-manifold which is tight with respect to a field $\mathbb{F}$, 
	and let $S(g,v) \subset M$ be a slicing. Then there exist an injective homomorphism
	$$ H_i (S(g,v),\mathbb{F}) \to H_i(M,\mathbb{F}) \oplus H_{i+1} (M,\mathbb{F}). $$
\end{prop}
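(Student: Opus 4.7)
The plan is to apply a Mayer--Vietoris argument to the decomposition of $M$ induced by the slicing and then to leverage the tightness hypothesis on each half.

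First I would identify $M^-$ and $M^+$ (up to homotopy) with induced subcomplexes. Let $W^- \subset V(M)$ collect the vertices below the slicing level and put $W^+ := V(M)\setminus W^-$. A standard polyhedral Morse-theoretic deformation retraction pushes $M^-$ onto the induced subcomplex $M[W^-]$, and analogously $M^+$ onto $M[W^+]$. Taking open thickenings of $M^\pm$ whose intersection deformation retracts onto $S(g,v)$, the associated Mayer--Vietoris sequence contains
\[
H_{i+1}(M,\mathbb{F}) \xrightarrow{\partial} H_i(S(g,v),\mathbb{F}) \xrightarrow{\phi} H_i(M[W^-],\mathbb{F}) \oplus H_i(M[W^+],\mathbb{F}) \xrightarrow{\psi} H_i(M,\mathbb{F}),
\]
with $\phi(\alpha) = (i^-_*\alpha, -i^+_*\alpha)$ and $\psi(x,y) = j^-_*x + j^+_*y$, where the $i^\pm$, $j^\pm$ are the evident inclusions. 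By the tightness of $M$, both $j^\pm_*$ are injective.

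Next I would bound $H_i(S(g,v),\mathbb{F})$ in two pieces. Exactness at $H_i(S(g,v),\mathbb{F})$ gives $\ker\phi = \operatorname{im}\partial$, hence $\dim_{\mathbb{F}}\ker\phi \le \dim_{\mathbb{F}} H_{i+1}(M,\mathbb{F})$. For $\operatorname{im}\phi$ I would observe that the projection to the first summand is injective on $\operatorname{im}\phi$: if $\phi(\alpha) = (0,y)$, then $0 = \psi\phi(\alpha) = j^+_*y$, so $y = 0$ by the injectivity of $j^+_*$. Composing with $j^-_*$ (injective by tightness) then embeds $\operatorname{im}\phi$ into $H_i(M,\mathbb{F})$, yielding $\dim_{\mathbb{F}}\operatorname{im}\phi \le \dim_{\mathbb{F}} H_i(M,\mathbb{F})$. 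Adding the two bounds gives
\[
\dim_{\mathbb{F}} H_i(S(g,v),\mathbb{F}) \le \dim_{\mathbb{F}} H_i(M,\mathbb{F}) + \dim_{\mathbb{F}} H_{i+1}(M,\mathbb{F}),
\]
and over a field such a dimension inequality is equivalent to the existence of an injective $\mathbb{F}$-linear map $H_i(S(g,v),\mathbb{F}) \hookrightarrow H_i(M,\mathbb{F}) \oplus H_{i+1}(M,\mathbb{F})$, as required.

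The main subtlety is the homotopy equivalence $M^\pm \simeq M[W^\pm]$, which is what makes tightness of $M$ applicable; once this is accepted as a standard property of sublevel sets of rsl-functions, the rest reduces to a routine diagram chase in the Mayer--Vietoris sequence. One could also try to write down a natural map $H_i(S(g,v),\mathbb{F}) \to H_i(M,\mathbb{F}) \oplus H_{i+1}(M,\mathbb{F})$, but because $\partial$ is not uniquely invertible on $\ker\phi$, any such construction requires an (unnatural) splitting, which is exactly the content of working over a field.
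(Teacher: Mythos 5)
Your argument is, at bottom, the same one the paper gestures at: split $M$ along the slicing into $M^-$ and $M^+$, identify each piece (up to homotopy) with an induced subcomplex so that tightness gives injectivity of $H_i(M^\pm)\to H_i(M)$, and then see that a class in $H_i(S)$ either survives into $H_i(M)$ or bounds on both sides and glues to a class in $H_{i+1}(M)$. The difference is one of rigour rather than strategy. The paper's proof asserts a dichotomy ("either $c_i$ is nontrivial in both $H_i(M^\pm)$, or it bounds in both") that is not exhaustive --- a meridian of a Heegaard solid torus bounds on one side and not the other --- and then tacitly treats the resulting assignment as an injective map without verifying well-definedness or linearity. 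Your version replaces that informal map by the actual Mayer--Vietoris sequence, handles the mixed case (bounds on one side only) automatically, and sidesteps the lack of a natural map $H_i(S)\to H_i(M)\oplus H_{i+1}(M)$ by doing a dimension count and invoking that, over a field, a dimension inequality produces an (unnatural) injection --- which is all the proposition claims. Your use of the diagram chase showing the first projection is injective on $\operatorname{im}\phi$ is exactly the point where tightness of \emph{both} $M[W^-]$ and $M[W^+]$ is needed, something the paper leaves implicit. The one ingredient both of you take on faith is the deformation retraction $M^\pm\simeq M[W^\pm]$, which is indeed a standard fact for sublevel sets of rsl-functions and is a fair thing to cite. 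In short: same decomposition and same use of tightness, but you supply the missing exact-sequence machinery that makes the argument watertight.
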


\begin{proof}
	Let $S(g,v) \subset M$ be a slicing of a tight $d$-manifold $M$
	and let $c_i \in H_i (S(g,v),\mathbb{F})$. Then either $c_i$ is an element of both
	$H_{i} (M^+,\mathbb{F})$ and $H_{i} (M^-,\mathbb{F})$; or $c_i$ is the boundary of
	a sub-complex in both $M^+$ and $M^-$, and both sub-complexes glued along their common boundary $c_i$ 
	become an element of $H_{i+1} (M,\mathbb{F})$.
\end{proof}

In particular it follows from Proposition~\ref{prop:injHom} that 
$\beta_i (S,\mathbb{F}) \leq \beta_i (M,\mathbb{F}) + \beta_{i+1} (M,\mathbb{F})$.
Even more, we can improve Proposition~\ref{prop:injHom} in the following way.

\begin{prop}
	\label{prop:beta}
	Let $M$ be a $d$-dimensional combinatorial manifold with
	set of vertices $V$ which is tight with respect to $\mathbb{F}$. Then we have
	$$ \underset{g \textrm{ rsl-function}}{\max} \,\, \sum \limits_{i=0}^{d-1} \,\,
	\underset{1 \leq j \leq n}{\max} \,\, \beta_i (S(g,j),\mathbb{F}) \quad \leq \quad \sum \limits_{i=0}^{d} \beta_i (M,\mathbb{F}). $$
\end{prop}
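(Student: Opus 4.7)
It suffices to fix an arbitrary rsl-function $g: M \to \mathbb{R}$ and establish
$\sum_{i=0}^{d-1}\max_j \beta_i(S(g,j)) \le \sum_{i=0}^{d}\beta_i(M)$;
taking the outer maximum over $g$ on both sides then yields the claim. Since $M$ is tight, Definition~\ref{def:tight3} forces every such $g$ to be perfect, so the critical vertices of $g$ (counted with multiplicity) decompose into exactly $\beta_k(M)$ vertices of index $k$ for each $0 \le k \le d$. Tightness also forces $\beta_0(M) = \beta_d(M) = 1$, since any rsl-function must possess at least one global min and one global max.

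Next, I would track the evolution of $\beta_i(S(g,j))$ as $j$ runs from $0$ to $n$. Since $S(g,0) = S(g,n) = \emptyset$, each $\beta_i(S(g,\cdot))$ starts and ends at zero. Between consecutive critical vertices the slicing is unchanged up to PL-isomorphism, and passing a critical vertex of index $k$ with multiplicity $m$ performs $m$ independent $k$-surgeries on the slicing, each replacing an embedded $S^{k-1} \times D^{d-k}$ by $D^k \times S^{d-k-1}$; in the boundary cases $k=0$ or $k=d$ a full $(d-1)$-sphere component is adjoined or removed.

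For each $i$, let $b_i^+(g)$ denote the total number of strict increases of $\beta_i(S(g,j))$ along the sweep. Since the sequence begins and ends at zero, $\max_j \beta_i(S(g,j)) \le b_i^+(g)$. The central combinatorial step is to bound
$\sum_{i=0}^{d-1} b_i^+(g) \le \sum_{i=0}^{d}\beta_i(M).$
A surgery analysis, underpinned by Proposition~\ref{prop:injHom} and the tightness-induced injections $H_*(M^\pm_j)\hookrightarrow H_*(M)$, establishes that each middle-index critical vertex ($1 \le k \le d-1$) contributes at most one birth to $\sum_i b_i^+(g)$ (the belt sphere $S^{d-k-1}$), each index-$0$ vertex contributes exactly two births (a new $0$-cycle from the new connected component plus the fundamental class of the adjoined $(d-1)$-sphere), and each index-$d$ vertex contributes none. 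Aggregating with the perfect count gives $\sum_i b_i^+(g) \le 2\beta_0(M) + \sum_{k=1}^{d-1} \beta_k(M) = \sum_{i=0}^{d}\beta_i(M)$, using $\beta_0(M)=\beta_d(M)=1$.

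The main obstacle will be the rigorous book-keeping of the surgery step in non-generic cases. When the attaching sphere $S^{k-1}$ of a middle-index surgery is null-homologous in the current slicing, the surgery fails to kill a $(k-1)$-cycle and instead births a new $k$-cycle (the old bounding disk glued to the surgery disk forms an $S^k$); an analogous dual phenomenon can occur for a bounding belt sphere. One must verify that in every such degenerate case a single middle-index critical vertex still contributes at most one birth to the total $\sum_i b_i^+(g)$, just possibly shifted across dimensions, so that the aggregate count remains bounded by $\sum_{i=0}^{d}\beta_i(M)$.
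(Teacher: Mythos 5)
The paper's own ``proof'' of Proposition~\ref{prop:beta} is not really a proof at all: it is a one-sentence remark explaining why the order of the quantifiers ($\max_g$ outside, $\max_j$ inside the sum) matters, and then stops. So your proposal is attempting to fill a gap that the paper itself leaves open, and your reduction to a fixed $g$, the use of perfection (Definition~\ref{def:tight3}), the observation $\beta_0(M)=\beta_d(M)=1$, and the bound $\max_j \beta_i(S(g,j)) \le b_i^+(g)$ (reading $b_i^+$ as the total upward increment of $\beta_i$ over the sweep, which is what you must mean for that inequality to hold) are all sound.

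The step that fails is the central surgery estimate: it is \emph{not} true that a middle-index critical vertex contributes at most one birth per unit multiplicity to $\sum_i b_i^+(g)$, and your final paragraph --- which correctly identifies the degenerate case where the attaching $(k-1)$-sphere is null-homologous and a new $k$-cycle is born --- reaches the wrong conclusion about it. In that degenerate case the belt $(d-k-1)$-sphere can \emph{simultaneously} be homologically essential, so a single $k$-surgery produces two births and zero deaths, not one of each. This already happens for the tight $9$-vertex $S^2\dtimes S^1$ (or equally for $S^1\times S^2$): the index-$1$ critical vertex of a perfect rsl-function converts the $S^2$ slicing into a $T^2$, so $\beta_1(S(g,j))$ jumps from $0$ to $2$ at a single middle-index vertex. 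Your per-vertex bound is therefore violated by the very manifolds the paper most cares about.

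The final inequality you write down is nevertheless correct, but it needs a global balancing argument rather than a local one. Each $k$-surgery ($1\le k\le d-1$) has exactly one ``attaching event'' (kill a $(k-1)$-class or birth a $k$-class) and one ``belt event'' (birth a $(d-k-1)$-class or kill a $(d-k)$-class), so each surgery contributes exactly two to $\sum_i b_i^+(g)+\sum_i b_i^-(g)$; the minimum contributes $(2,0)$ and the maximum $(0,2)$. Because the slicing starts and ends empty, total births equal total deaths, and combining these two facts forces the middle-index surgeries to contribute exactly $\sum_{k=1}^{d-1}\beta_k(M)$ births in aggregate --- the excess from a $(2,0)$-surgery must be offset by a $(0,2)$-surgery elsewhere. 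This global count, not the incorrect per-vertex ``at most one,'' is what yields $\sum_i b_i^+(g)=\sum_{k=0}^d\beta_k(M)$ and hence the proposition. Without this balancing step the argument as written does not go through.
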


\begin{proof}
	First of all note that it is essential to first pick an rsl-function $g$ and then 
	look at the maximum Betti number of $S(g,j)$ for all levels $j$ of $g$.
	In particular the statement is false if we sum over the maximum Betti numbers
	of slicings associated to distinct rsl-functions.
\end{proof}

So far when talking about tightness we did not pay attention to the choice of field $\mathbb{F}$.
However, if we choose $\mathbb{F}$ carefully the problem of determining whether $M$ is tight 
becomes more clear. 

\begin{prop}[Definition 2.7 \cite{Bagchi14StellSpheresTightness}]
	Let $M$ be a combinatorial $d$-manifold. Then the following are equivalent:
	\begin{enumerate}[(i)]
		\item $M$ is tight with respect to all fields,
		\item $M$ is tight with respect to all fields of prime order,
		\item $M$ is tight with respect to $\mathbb{Q}$.
	\end{enumerate}
\end{prop}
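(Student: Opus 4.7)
My plan is to prove the cyclic implications $(i)\Rightarrow(ii)\Rightarrow(iii)\Rightarrow(i)$, with the first being immediate from the definitions.

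The preliminary observation I would establish is that tightness over a field depends only on the characteristic. Indeed, for any field extension $\mathbb{K}\supseteq\mathbb{F}$ and any finite simplicial complex $X$ one has $H_i(X,\mathbb{K})\cong H_i(X,\mathbb{F})\otimes_{\mathbb{F}}\mathbb{K}$, and because $\mathbb{K}$ is $\mathbb{F}$-flat, a linear map between $\mathbb{F}$-vector spaces is injective if and only if its extension of scalars to $\mathbb{K}$ is injective. Applying this to every inclusion-induced map $H_i(M[W],\mathbb{F})\to H_i(M,\mathbb{F})$ shows that $M$ is tight over $\mathbb{F}$ if and only if $M$ is tight over the prime subfield of $\mathbb{F}$. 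Consequently $(i)$ is equivalent to the conjunction of $(ii)$ and $(iii)$, and the proposition reduces to establishing $(ii)\Leftrightarrow(iii)$.

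For $(ii)\Rightarrow(iii)$, I would consider the integral inclusion-induced map $\varphi\colon H_i(M[W],\mathbb{Z})\to H_i(M,\mathbb{Z})$ for a fixed subset $W\subseteq V(M)$ and degree $i$, and combine the universal coefficient theorem with the following elementary fact: any non-torsion element of $\ker\varphi$ survives every mod-$p$ reduction and hence produces a non-zero element of $\ker(\varphi\otimes\mathbb{F}_p)$ for every prime $p$. Thus $\mathbb{F}_p$-tightness for all $p$ forces $\ker(\varphi\otimes\mathbb{Q})=0$, which is rational tightness.

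The main obstacle is the converse $(iii)\Rightarrow(ii)$: rational tightness controls only the free part of integral homology of induced subcomplexes, while $\mathbb{F}_p$-tightness is also sensitive to $p$-torsion. The natural route is to pass through Definition~\ref{def:tight3} and argue that $\mathbb{Q}$-perfectness of every rsl-function is so restrictive that it forces the integral relative homology groups $H_\ast(M_x,M_x\setminus\{x\},\mathbb{Z})$ at each critical vertex $x$ to be torsion-free. Here the PL-sphere hypothesis on vertex links should be decisive, as it constrains the local combinatorial data at every vertex. Once such torsion-freeness is established, universal coefficients transports perfectness from $\mathbb{Q}$ to every $\mathbb{F}_p$, and tightness over all prime fields follows, closing the cycle.
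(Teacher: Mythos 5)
The paper itself does not prove this proposition: it is quoted verbatim as ``Definition 2.7'' from Bagchi and Datta and used as a black box, so there is no in-paper argument to compare yours against. On its own terms, your proposal has one clean ingredient and one genuine gap. The reduction ``tightness over $\mathbb{K}$ is equivalent to tightness over the prime subfield of $\mathbb{K}$'' via flatness of field extensions is correct, and it does reduce the proposition to $(ii)\Leftrightarrow(iii)$. Your $(ii)\Rightarrow(iii)$ sketch, however, contains a small but real error: a non-torsion element of $\ker\varphi$ does \emph{not} ``survive every mod-$p$ reduction'' -- it can lie in $p\,H_i(M[W],\mathbb{Z})$ for some primes $p$. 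The argument is salvageable by taking a primitive representative in the free part of $H_i(M[W],\mathbb{Z})$ and choosing $p$ larger than all torsion orders appearing in $H_*(M[W],\mathbb{Z})$ and $H_*(M,\mathbb{Z})$, then invoking naturality of the universal coefficient sequence.

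The direction $(iii)\Rightarrow(ii)$ is where the proposal breaks down, and it is precisely the non-trivial content of the claim. You acknowledge this yourself and only gesture at a route (``$\mathbb{Q}$-perfectness of every rsl-function \ldots should be decisive'' in forcing the local relative groups $H_*(M_x,M_x\setminus\{x\},\mathbb{Z})$ to be torsion-free). This is a plan, not a proof, and it is far from clear that it works: the lower link of a vertex inside a PL $(d-1)$-sphere can certainly carry torsion for $d\geq 4$, and even if the relative groups at critical vertices were torsion-free you would still need to rule out $p$-torsion in $H_*(M[W],\mathbb{Z})$ for \emph{every} induced subcomplex $W$, which is a much larger family. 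Moreover, observe that $\sum_i\beta_i(M,\mathbb{F}_p)\geq\sum_i\beta_i(M,\mathbb{Q})$ with strict inequality whenever $M$ has $p$-torsion; an argument for $(iii)\Rightarrow(ii)$ must in particular explain why $\mathbb{Q}$-tightness forbids any torsion in $H_*(M,\mathbb{Z})$, and your sketch does not address this. To close the gap you would need to consult the actual proof in Bagchi and Datta rather than reconstruct it; as written, the cycle $(iii)\Rightarrow(i)$ is unproved.
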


Furthermore, we have

\begin{prop}[Proposition 2.9(b) \cite{Bagchi14StellSpheresTightness}]
	Let $M$ be a combinatorial $d$-manifold which is tight with respect to a field $\mathbb{F}$. 
	Then $M$ is also $\mathbb{F}$-orientable.
\end{prop}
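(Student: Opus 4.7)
The plan is to use the equivalent characterisation of tightness given by Definition~\ref{def:tight3}: $M$ is $\mathbb{F}$-tight if and only if every rsl-function on $M$ is perfect, i.e., for every index $i$ the number of critical points of index $i$ (counted with multiplicity) equals $\beta_i(M,\mathbb{F})$. From this viewpoint it will suffice to produce, for every rsl-function, at least one critical point of index $d$; perfectness then forces $\beta_d(M,\mathbb{F}) \geq 1$, and for a connected closed combinatorial $d$-manifold this is equivalent to $\mathbb{F}$-orientability.

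The key local observation I would establish is that the global maximum of any rsl-function is always a critical point of index $d$ with multiplicity $1$. Fix an arbitrary rsl-function $g$ and let $v$ be the vertex attaining the maximum value of $g$, so that $M_v = M$. Because $M$ is a combinatorial $d$-manifold, the link of $v$ is a PL $(d-1)$-sphere and the closed star of $v$ is a cone over it. A short excision plus long-exact-sequence argument on the pair $(M_v, M_v \setminus \{v\})$ then yields $H_d(M, M \setminus \{v\}, \mathbb{F}) \cong \mathbb{F}$ while all other local homology groups vanish, so $v$ is critical of index $d$ with multiplicity $1$ in the sense of Section~\ref{ssec:rsl}.

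Combining the two ingredients: tightness makes $g$ perfect, the top vertex contributes a critical point of index $d$, hence $\beta_d(M,\mathbb{F}) \geq 1$. Connectedness (which is built into the tightness hypothesis) together with the closed manifold hypothesis gives $\beta_d(M,\mathbb{F}) \leq 1$, with equality precisely in the $\mathbb{F}$-orientable case. Combining these two inequalities produces $\mathbb{F}$-orientability.

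The only point I expect to require some care is the local homology computation at the top vertex under the paper's specific definition of critical point via the relative pair $(M_x, M_x \setminus \{x\})$, in particular in the boundary case $M_v = M$ where one must still excise a punctured star neighbourhood of $v$ and identify it, up to homotopy, with the link $\operatorname{link}(v) \cong S^{d-1}$. Once this identification is made the computation is completely standard, and everything else is a direct combination of the perfect-function reformulation of tightness with basic closed-manifold topology.
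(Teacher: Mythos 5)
Your argument is correct. The paper itself does not reprove this proposition but merely cites Bagchi and Datta \cite{Bagchi14StellSpheresTightness}, so there is no in-text proof to compare against directly. You argue through the rsl-function characterisation of tightness (Definition~\ref{def:tight3}): the vertex $v$ attaining the maximum of any rsl-function $g$ has $M_v = M$, the local homology $H_d(M, M \setminus \{v\};\mathbb{F}) \cong \mathbb{F}$ at any point of a closed PL $d$-manifold, so $v$ is critical of index $d$ with multiplicity one; hence every rsl-function has $\mu_d \geq 1$, perfectness forces $\beta_d(M,\mathbb{F}) = \mu_d \geq 1$, and for a connected closed $d$-manifold this is equivalent to $\mathbb{F}$-orientability. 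Bagchi and Datta's original argument works from the induced-subcomplex definition of tightness instead, essentially applying injectivity of $H_{d-1}(M[V\setminus\{v\}],\mathbb{F}) \to H_{d-1}(M,\mathbb{F})$ on the antistar of a vertex together with the long exact sequence of $(M, M\setminus\{v\})$; both routes boil down to the same local homology computation, and yours is the more natural one given this paper's Morse-theoretic framework. One small phrasing caveat: the paper defines ``perfect'' as the \emph{total} number of critical points equalling the \emph{total} Betti number; passing from that to the index-wise equality $\mu_i = \beta_i$ you use requires the weak Morse inequalities $\mu_i \geq \beta_i$, which hold in this polyhedral setting \cite{Kuehnel95TightPolySubm} but should be invoked explicitly rather than folded into the definition.
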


Thus, {\em (i)} any non-orientable manifold is at most $\mathbb{F}_2$-tight
and when talking about tightness we 
can always assume Poincar\'e duality holds, and {\em (ii)} any $\mathbb{Q}$-tight
combinatorial manifold is also $\mathbb{F}_2$-tight. Furthermore, most 
combinatorial manifolds cannot be tight unless they are tight with respect to $\mathbb{F}_2$ 
and there is no example of a tight combinatorial manifold known to the author
for which this statement doesn't hold. For this reason we will treat $\mathbb{F}_2$-tightness 
as equivalent to tightness whenever possible and will only
consider other fields when necessary. 

In particular, whenever we talk about critical points and Betti numbers
of a tight combinatorial manifold $M$ we will omit the field $\mathbb{F}$ and simply
assume that either $\mathbb{F} = \mathbb{F}_2$ or $\mathbb{F}$ is chosen such that $M$ 
is tight with respect to $\mathbb{F}$.

These simple observations allow us to impose bounds on the Betti numbers of tight combinatorial 
manifolds by looking at the topology of slicings. Note that in the case of odd-dimensional combinatorial
manifolds this leaves us with studying even-dimensional slicings inside the manifold. 
This is of particular convenience because, as already pointed out in 
\cite{Kuehnel95TightPolySubm}, questions about tightness are much harder to investigate 
in the odd-dimensional case due to the vanishing Euler characteristic.
Looking at slicings enables us to use Euler characteristic arguments in the
odd-dimensional case as well. This will complement observations 
made by Effenberger who gave a tightness criterion
in the odd-dimensional case $d \geq 5$ by looking at properties
of the (even-dimensional) vertex links \cite{Effenberger09StackPolyTightTrigMnf}.

In Section~\ref{sec:upperBound} and \ref{sec:neighborly} we present necessary conditions 
for tightness which are most powerful in the missing case of dimension three.
However, let us first state a further {\em topological} constraint for tightness dimension three.

\begin{prop}
	\label{prop:heegaard}
	Let $\mathbb{M}$ be an $\mathbb{F}$-orientable three-manifold of Heegaard genus $\mathbf{g}$
	such that $\beta_1 (\mathbb{M},\mathbb{F}) < \mathbf{g}$, then $\mathbb{M}$ 
	cannot have an $\mathbb{F}$-tight triangulation, that is, there is no 
	$\mathbb{F}$-tight combinatorial manifold $M \cong \mathbb{M}$.
\end{prop}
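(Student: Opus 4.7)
The plan is to extract from any $\mathbb{F}$-tight triangulation $M$ of $\mathbb{M}$ a Heegaard splitting of $\mathbb{M}$ of genus at most $\beta_1(\mathbb{M},\mathbb{F})$; the proposition then follows by contraposition.

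First I would invoke Definition~\ref{def:tight3}: tightness of $M$ forces every rsl-function $g \colon M \to \mathbb{R}$ to be perfect, so fix any such $g$. Because $\mathbb{M}$ is closed, connected and $\mathbb{F}$-orientable, Poincar\'e duality over $\mathbb{F}$ yields $\beta_0(M)=\beta_3(M)=1$ and $\beta_1(M)=\beta_2(M)=:k$. Perfectness of $g$ therefore forces exactly one critical vertex of index $0$, one of index $3$, and $k$ critical vertices of each of indices $1$ and $2$ (counted with multiplicity). Passing to the induced PL-handle decomposition, $M$ is built from one $0$-handle, $k$ $1$-handles, $k$ $2$-handles and one $3$-handle.

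Second, I would appeal to the standard PL handle rearrangement lemma: for $i<j$, the attaching $(i-1)$-sphere of an $i$-handle and the belt $(2-j)$-sphere of a previously attached $j$-handle lie in the $2$-dimensional intermediate boundary and their dimensions sum to $i-j+1 \le 0$, so a general-position isotopy separates them and the two handles may be swapped. Iterating, one may arrange that all $i$-handles are attached before any $(i+1)$-handle. The $0$-handle together with the $k$ $1$-handles then forms a genus-$k$ handlebody $H^-$, and dually the $k$ $2$-handles together with the $3$-handle form a genus-$k$ handlebody $H^+$. Their common boundary is a closed surface $\Sigma$ of genus $k$, so $M = H^- \cup_\Sigma H^+$ is a Heegaard splitting of genus $k = \beta_1(\mathbb{M},\mathbb{F})$, giving $\mathbf{g} \le \beta_1(\mathbb{M},\mathbb{F})$ as required.

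The main obstacle is to make the handle-rearrangement step rigorous within the combinatorial/PL framework used throughout the paper; this is however classical once we have a Morse-like function that realises the minimum number of critical points of each index, and Definition~\ref{def:tight3} guarantees such a function. A more slicing-theoretic alternative would use Proposition~\ref{prop:injHom} to bound $\beta_1(S) \le \beta_1(M) + \beta_2(M) = 2k$ and hence the genus of every slicing $S$ by $k$; but upgrading this to a Heegaard splitting still requires showing that some slicing bounds handlebodies on both sides, which reduces to the same rearrangement argument, so the topological route above appears cleanest.
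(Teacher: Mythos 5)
Your proposal is correct in outline and reaches the same conclusion as the paper, but the route through it is genuinely different, and the difference is worth noting. The paper never passes to an abstract handle decomposition nor invokes handle rearrangement. Instead it builds the handlebody $H$ directly and explicitly: starting with a neighbourhood of the unique index-$0$ vertex and sweeping through the vertices in the order given by $g$, it thickens one edge from $v_{i-1}$ to $v_i$ when $v_i$ is non-critical (here two-neighbourliness, forced by tightness, guarantees that edge exists), and when $v_i$ is critical of index $1$ with multiplicity $m$ it thickens $m+1$ edges joining $v_i$ to the $m+1$ components of $\operatorname{lk}(v_i)\cap H$. After the sweep, $H$ is a regular neighbourhood of a graph whose first Betti number is $\beta_1(M,\mathbb{F})$, hence a genus-$\beta_1$ handlebody. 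That the complement is again a handlebody is then deduced by running $-g$ on $M\setminus H$ and observing it has no critical points of index $2$ or $3$, rather than by rearranging handles. In effect, the paper sidesteps the one step you flag as the main obstacle, namely making PL handle rearrangement and the passage from a perfect rsl-function to a handle decomposition with one handle per critical point (counted with multiplicity) rigorous. Your appeal to general position in the $2$-dimensional intermediate boundary is essentially sound, with the small caveat that rsl-critical points of multiplicity $m>1$ are degenerate in the Morse-theoretic sense and contribute $m$ handles at once, so the passage to a handle decomposition needs the same local analysis of $\operatorname{lk}(v_i)\cap M^-$ that the paper carries out explicitly. Your approach is more conceptual and relies on standard machinery; the paper's is more elementary, self-contained, and stays inside the combinatorial framework already set up, which is why it needs no rearrangement lemma. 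Both are acceptable, and you correctly identified where the work would have to be done.
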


\begin{proof}
	Assume that $M$ is a combinatorial manifold homeomorphic to $\mathbb{M}$ with vertex
	set $V(M) = \{ v_1, \ldots , v_n \}$ which is tight with respect to $\mathbb{F}$. Then every rsl-function 
	$g : M \to \mathbb{R}$ has exactly $\beta_1 (M,\mathbb{F})$ critical points of index one.  

	We will prove the statement by using $g$ to construct a handlebody decomposition of $M$
	of genus $\beta_1 (M,\mathbb{F})$. This will then contradict the assumption
	$\beta_1 (M,\mathbb{F}) < \mathbf{g}$.

	\medskip
	Without loss of generality, let $g : M \to \mathbb{R}$ be given by $g (v_1) < g(v_2) < \ldots < g(v_n)$.
	Start by setting $H$ to be a neighbourhood around $v_1$, the unique critical point of index zero. 
	Now proceed by considering every vertex in the ordering given by $g$. If $v_i$ is not critical of
	index one, unite $H$ with a neighbourhood of the edge going from $v_{i-1}$ to $v_i$ (note that $M$ being
	tight implies that $M$ is two-neighbourly and thus every two points are connected by an edge).
	If $v_i$ is critical of index one and multiplicity $m$, the intersection of the link of $v_i$ and
	$H$ will have precisely $m+1$ connected components, for each of them unite $H$ with a small neighbourhood 
	of an edge connecting it to $v_i$. As a result we will get a handle body $H \subset M$ of genus $\beta_1 (M,\mathbb{F})$.

	To see that $M \setminus H$ is also a handlebody consider the rsl-function $-g : M \to \mathbb{R}$ restricted to $M \setminus H$.
	By construction $-g$ will not have any critical points of index two or three in $M \setminus H$ and only one critical point of index zero
	and $\beta_1 (M,\mathbb{F})$ critical points of index one. Thus, $M \setminus H$ is a handle body and $(H, M \setminus H)$
	is a handle body decomposition of $M$ of genus $\beta_1 (\mathbb{M},\mathbb{F})$.
	Contradiction to the assumption that $M$ has Heegaard genus
	$\beta_1 (\mathbb{M},\mathbb{F}) < \mathbf{g}$. Hence, $\mathbb{M}$ does not admit any $\mathbb{F}$-tight triangulation. 
\end{proof}

The construction given above is a discrete version of a standard technique in smooth Morse theory.
Given a $d$-manifold $M$ and a Morse function $f : M \to \mathbb{R}$, a handle decomposition $H$
(note that Heegaard decompositions are a special case of handle decompositions) 
can be constructed by adding an $i$-handle for each critical point of index $i$
(see for instance \cite[Section 4.2]{Gompf}).
Thus, replacing the Heegaard genus by the size of a handle decomposition with the minimum
number of handles might help to generalise this statement to the higher dimensional case.

\section{An upper bound for tight odd-dimensional combinatorial manifolds}
\label{sec:upperBound}

In this section we will look at the following situation.
Let $M$ be an $n$-vertex combinatorial $(2 \ell+1)$ manifold 
with vertex set $V$ and $f$-vector $f(M) = (n,f_1, \ldots ,f_{2k+1})$.
Furthermore, let 
$$ \mathbb{S}_k := \{ S(g,v) \subset M \,|\, M^- \textrm{ contains } k \textrm{ vertices } \}$$
be the set of slicings of $M$ separating $k$ vertices from 
the other $n-k$ vertices and let
$$ \bar{f} (\mathbb{S}_k) =  (\bar{f}_{0,k},\bar{f}_{1,k} , \ldots , \bar{f}_{2 \ell, k}) $$
be the {\it average $f$-vector of $S \in \mathbb{S}_k$}, that is, 
the sum over all $f$-vectors of slicings in $\mathbb{S}_k$ divided
by the cardinality of $\mathbb{S}_k$. With these definitions we can state

\begin{prop}
	\label{prop:avgf}
	Let $M$ be an $n$-vertex combinatorial $(2\ell +1)$-manifold with vertex set $V$.
	The the average number of $i$-faces of a slicing $S \in \mathbb{S}_k$ equals
	$$ \bar{f}_{i,k} = f_{i+1} \left ( 1 - \frac{{k \choose i+2} + {n-k \choose i+2}}{{n \choose i+2}} \right ) $$
	where $f_{i+1}$ is the number of $(i+1)$-faces of $M$, and
	the average Euler characteristic of slicings in $\mathbb{S}_k$ is given by
	$$ \bar{\chi}_k = \sum \limits_{i=0}^{2 \ell+1} (-1)^{i} \frac{f_{i}}{{n \choose i+1}} \left [ {k \choose i+1} + {n-k \choose i+1} \right ] .$$
	In particular, $\bar{\chi}_k$ does not depend on topological properties of $M$.
\end{prop}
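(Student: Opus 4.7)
I would begin by identifying $\mathbb{S}_k$ with the set of $k$-element subsets $W \subset V$, letting $W = V(M^-)$: given any such $W$, one can choose an rsl-function $g$ sending every vertex of $W$ below every vertex of $V \setminus W$, and the resulting slicing only depends (up to the isomorphism class defined in Section~\ref{ssec:rsl}) on $W$. Hence $|\mathbb{S}_k| = \binom{n}{k}$. The geometric key is then that an $(i+1)$-simplex $\sigma \in M$ contributes a single $i$-dimensional cell to the polyhedral slicing $S_W$ precisely when $V(\sigma)$ meets both $W$ and $V \setminus W$, and does not meet $S_W$ at all otherwise. Therefore
$$ f_i(S_W) \;=\; f_{i+1}(M) \;-\; \#\{\sigma \in M : \dim\sigma = i+1,\; V(\sigma) \subset W\} \;-\; \#\{\sigma : V(\sigma) \subset V \setminus W\}. $$

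For the first identity I would now average over all $k$-subsets $W$. Each fixed $(i+1)$-face $\sigma$ is contained in exactly $\binom{n-i-2}{k-i-2}$ of them, and the standard identity $\binom{n-i-2}{k-i-2}\binom{n}{k}^{-1} = \binom{k}{i+2}\binom{n}{i+2}^{-1}$ yields
$$ \frac{1}{|\mathbb{S}_k|} \sum_{|W|=k} \#\{(i+1)\text{-faces with } V(\sigma)\subset W\} \;=\; f_{i+1}\,\frac{\binom{k}{i+2}}{\binom{n}{i+2}}. $$
The same argument applied to $V\setminus W$ produces the formula for $\bar{f}_{i,k}$.

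For the Euler characteristic, I would substitute into $\bar{\chi}_k = \sum_{i=0}^{2\ell}(-1)^i \bar{f}_{i,k}$ and split the result into two sums. The part not involving the binomial quotients reindexes via $j=i+1$ to $-\sum_{j=1}^{2\ell+1}(-1)^j f_j = f_0 - \chi(M) = n$, since every closed odd-dimensional manifold has vanishing Euler characteristic (by $\mathbb{F}_2$-Poincar\'e duality, which does not require orientability). The part with the binomial quotients reindexes to $-\sum_{j=1}^{2\ell+1}(-1)^j f_j \bigl(\binom{k}{j+1}+\binom{n-k}{j+1}\bigr)/\binom{n}{j+1}$. Since at $j=0$ the factor $\bigl(\binom{k}{1}+\binom{n-k}{1}\bigr)/\binom{n}{1}$ equals $1$, the remaining $n$ is exactly the $j=0$ term that must be prepended to extend the sum down to $j=0$, producing the closed form in the statement.

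The vanishing $\chi(M) = 0$ is what makes the topology of $M$ drop out: once it is used, only the $f$-vector and the numbers $n$ and $k$ remain. No step is deep; the main book-keeping concern is carrying out the reindexing and the absorption of the $j=0$ term cleanly, so that the two formulas emerge in exactly the form stated.
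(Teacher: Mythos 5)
Your argument is correct, and for the first formula it takes a genuinely different and arguably cleaner route than the paper. The paper fixes $\Delta$, counts the $(i+2)$-subsets $\delta$ that straddle the partition as $\sum_{j=1}^{i+1}\binom{k}{j}\binom{n-k}{i+2-j}$, and then needs the Chu--Vandermonde identity (Equation~(\ref{eq:chu})) to reduce this sum to $\binom{n}{i+2}-\binom{k}{i+2}-\binom{n-k}{i+2}$. You instead average the complementary count --- $(i+1)$-faces wholly inside $W$ or wholly in $V\setminus W$ --- directly over all $k$-subsets, observing that a fixed $(i+1)$-face is contained in $W$ for exactly $\binom{n-i-2}{k-i-2}$ choices of $W$, and then invoke only the elementary ratio identity $\binom{n-i-2}{k-i-2}\big/\binom{n}{k}=\binom{k}{i+2}\big/\binom{n}{i+2}$. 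This removes the hypergeometric input entirely from this step, which is a real simplification since the paper otherwise reserves Chu--Vandermonde and Pfaff--Saalsch\"utz for the heavier lifting in Section~\ref{sec:neighborly}.

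The derivation of $\bar\chi_k$ is essentially identical to the paper's: both split into the two sums, both use $\chi(M)=0$ for closed odd-dimensional manifolds (the paper applies this silently in the step $\sum_{i=0}^{2\ell}(-1)^i f_{i+1}=n$; you make it explicit and correctly note it needs only $\mathbb{F}_2$-Poincar\'e duality), and both absorb the $j=0$ term of the target sum --- which evaluates to $n$ --- to extend the range down to $j=0$. One small slip to fix: after the reindexing $j=i+1$, the second part should read
$$ +\sum_{j=1}^{2\ell+1}(-1)^j f_j\,\frac{\binom{k}{j+1}+\binom{n-k}{j+1}}{\binom{n}{j+1}}, $$
with a leading plus, since $(-1)^i=(-1)^{j-1}=-(-1)^j$ cancels against the minus already in front of that sum. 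Your intermediate display carries a stray minus sign, but your final absorption step is only consistent with the corrected plus sign, so the conclusion as you state it is right.
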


\begin{proof}
	Fix a $k$ vertex subset $\Delta \subset V$ of $M$ 
	defining a slicing $S \in \mathbb{S}_k$ and consider 
	subsets $\delta \subset V$ of size $(i+2)$.
	$\delta$ intersects with $S$ if and only 
	if it is neither disjoint to nor contained in $\Delta$.

	Now, the number of ways exactly $j$ vertices, $0 < j < i+2$,
	of $\delta$ lie in $\Delta$ is ${ k \choose j }$ and
	for each of these choices, there are exactly ${ n-k \choose i+2-j }$
	ways for the other $i+2-j$ vertices of $\delta$ to be chosen of
	the remaining $n-k$ vertices. Summing up over $j$ this leaves us with 
	$$ \sum \limits_{j=1}^{i+1} {k \choose j} {n-k \choose i+2-j} $$
	possible intersections of subsets $\delta$ with the slicing given
	by $\Delta$. Now, if $\delta$ is an $(i+1)$-face of $M$ each 
	of these intersections results in exactly one face of $S$, and
	since there are ${n \choose i+2}$ such subsets
	$\delta$ but only $f_{i+1}$ $(i+1)$-faces we have for the average
	number of $(i+1)$-faces of $S$
	\begin{equation}
		\label{eq:fbar}
		\bar{f}_{i,k} = \frac{f_{i+1}}{{n \choose i+2}} \sum \limits_{j=1}^{i+1} {k \choose j}{n-k \choose i+2-j} .
	\end{equation}

	\medskip
	Applying the Chu--Vandermonde sum (cf. Equation~(\ref{eq:chu})) we get that
	\begin{equation}
		\label{eq:sum}
		\sum \limits_{j=1}^{i+1} {k \choose j} {n-k \choose i+2-j} = {n \choose i+2} - \left ({k \choose i+2} + {n-k \choose i+2} \right )
	\end{equation}
	and the result follows by introducing the RHS of Equation~(\ref{eq:sum}) into Equation~(\ref{eq:fbar}) for $\bar{f}_{i,k}$. Hence, we have
	%
	\begin{align*}
	\bar{\chi}_k &\quad=\quad \sum \limits_{i=0}^{2 \ell} (-1)^{i} \bar{f}_{i,k} \\
	&\quad=\quad \sum \limits_{i=0}^{2 \ell} (-1)^{i} f_{i+1} -  \sum \limits_{i=0}^{2 \ell} (-1)^i \frac{f_{i+1}}{{n \choose i+2}} \left [ {k \choose i+2} + {n-k \choose i+2} \right ] \\
	&\quad=\quad n + \sum \limits_{i=0}^{2 \ell} (-1)^{i+1} \frac{f_{i+1}}{{n \choose i+2}} \left [ {k \choose i+2} + {n-k \choose i+2} \right ] \\
	&\quad=\quad \sum \limits_{i=0}^{2 \ell+1} (-1)^{i} \frac{f_{i}}{{n \choose i+1}} \left [ {k \choose i+1} + {n-k \choose i+1} \right ] 
	\end{align*}
	which proves the statement.
\end{proof}

Very recently, Swartz proved a similar but more general result grouping together discrete normal surfaces 
of a combinatorial manifold $M$ where their corresponding dual one-cocycles lie in the same co-homology class
\cite{Swartz13AvgChi}. Slicings are precisely those discrete normal surfaces where the dual one-cocycle 
is trivial in co-homology. Proposition~\ref{prop:avgf} is thus a special case of Lemma 2.1 in \cite{Swartz13AvgChi}.

The significance of Proposition~\ref{prop:avgf} and the results in \cite{Swartz13AvgChi}
is that they allow to compute the average Euler characteristic
of slicings {\em independently} of the topology of the 
manifold $M$ itself. The average Euler characteristic only depends
on the number of faces of $M$ in each dimension. A priori this information
doesn't reveal any topological features of odd-dimensional combinatorial manifolds (e.g., by a theorem
of Sarkaria and Walkup \cite{Sarkaria83OnNeighTrig,Walkup70LBC34Mnf}, 
any three-manifold admits a triangulation with $f$-vector $f = (n,{n \choose 2},2({n \choose 2}-n),{n \choose 2}-n)$
for $n$ sufficiently large).
However, by Proposition~\ref{prop:avgf} almost all $f$-vectors of odd-dimensional 
combinatorial manifolds will give rise to either a strictly negative or a strictly 
positive average Euler characteristic. This postulates the existence of a 
slicing with non-trivial topology imposing lower bounds on the odd-dimensional Betti numbers 
of the surrounding manifold in the former, and lower bounds on the even-dimensional Betti numbers
in the latter case. By Proposition~\ref{prop:beta} these bounds then directly translate 
to topological constraints for {\em tight} combinatorial manifolds.

\medskip
In other words, Proposition~\ref{prop:avgf} confirms the intuition one might have that a manifold 
where some of its topological features are not visible in homology has a small chance of admitting 
a tight triangulation at all (cf. Proposition~\ref{prop:heegaard}). 

In the next Section we will investigate how Proposition~\ref{prop:avgf} can be restated
in the important special case of tight $(\ell-1)$-connected $(2\ell+1)$-manifolds.

\section{$(\ell-1)$-connected $(2\ell+1)$-manifolds}
\label{sec:neighborly}

In \cite{Kuehnel95TightPolySubm} K\"uhnel gives a tightness criterion for $(\ell-1)$-connected $2\ell$-manifolds.
This result is complemented by a classification of tight combinatorial three-manifolds with first Betti number at most one and
upper bounds on the number of vertices for 
simply connected five-manifolds with first Betti number at most one \cite[Theorem 5.3 and Proposition 7.3]{Kuehnel95TightPolySubm}.
Here, we want to generalise the latter set of results to the case of $(\ell-1)$-connected $(2\ell +1)$-manifolds
(note that K\"uhnel's results for odd dimensions can be recovered from Theorem~\ref{thm:dimOdd} as a special case).

Any tight $(\ell-1)$-connected combinatorial $(2\ell+1)$-manifold must be $(\ell +2)$-neighbourly. To see this
assume that there is a combinatorial $(2\ell +1)$-manifold $M$
with a minimal missing $k$-face $\Delta$, i.e., all proper subsets of 
$\Delta$ span a face in $M$ while $\Delta$ is not a face of $M$, $k \leq \ell+1$. 
Then the slicing separating the span of $\Delta$
and the rest of the manifold gives an obstruction to tightness.
Now by the upper bound theorem 
\cite{Novik98UBTHomMnf,Novik08SocBuchsMod} we know that an 
$(\ell +2)$-neighbourly combinatorial $(2\ell +1)$-manifold $M$ must have the 
$f$-vector of the boundary complex of the cyclic $(2\ell +2)$-polytope
and is thus determined by fixing the dimension and the number of vertices $n$.

Before we can prove Theorem~\ref{thm:dimOdd}, we first have to establish
some useful tools.

\begin{lemma}
	\label{lem:sums}
	Let
	$$ s_{i,j} (k,n) = (-1)^i {k\choose i}{n-1 \choose i}^{-1} {n-j-1 \choose i-j}{n-i \choose 2j -i}.$$
	Then for any $j \geq 0$ we have 
	\begin{equation}\label{eq:i}
		\sum_{i \geq 0} s_{i,j}(k,n)=(-1)^j{k \choose j}{n-k \choose j}{n-1 \choose j}^{-1}
	\end{equation}
	and for any integer $i$
	\begin{equation}\label{secondsum}
		\sum_{j \geq 0} s_{i,j}(k,n)=(-1)^i \binom{k}{i}
	\end{equation}
	holds.
\end{lemma}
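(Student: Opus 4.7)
The plan is to evaluate the two sums $(\ref{eq:i})$ and $(\ref{secondsum})$ by recognising each as a generalised hypergeometric series at argument $z=1$ and invoking the two summation theorems recalled in the preliminaries: Chu--Vandermonde $(\ref{eq:chu})$ for the first and Pfaff--Saalsch\"utz $(\ref{eq:pfaff})$ for the second.

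For $(\ref{eq:i})$, I would fix $j$ and shift the summation index via $i = j + m$, noting that $s_{j+m,j}(k,n)$ vanishes outside the range $0 \leq m \leq j$. Writing the shifted summand as $T_m$, a direct computation yields the term ratio
\[
\frac{T_{m+1}}{T_m} \;=\; \frac{(m+j-k)(m-j)}{(m+1)(m+j-n)},
\]
which identifies the sum as a ${}_{2}F_{1}(j-k,\,-j;\,j-n;\,1)$ multiplied by the $m=0$ term $T_0 = (-1)^j \binom{k}{j}\binom{n-j}{j}\binom{n-1}{j}^{-1}$. Chu--Vandermonde $(\ref{eq:chu})$ evaluates the hypergeometric factor to $(k-n)_j/(j-n)_j$, and converting back to binomials via $(\ref{eq:1})$ identifies this ratio with $\binom{n-k}{j}\binom{n-j}{j}^{-1}$. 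The two $\binom{n-j}{j}$ factors then cancel, producing the claimed right-hand side.

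For $(\ref{secondsum})$ the strategy is analogous, but now $j$ is the summation index and three of the four binomial factors depend on $j$, yielding a sum of ${}_{3}F_{2}$-type. Reindexing by $m = i-j$ and setting $a = n - i$ reduces the claim to $\sum_{m} \binom{a+m-1}{m}\binom{a}{i-2m} = \binom{a+i-1}{i}$; the summand $U_m$ has ratio
\[
\frac{U_{m+1}}{U_m} \;=\; \frac{(a+m)(i-2m)(i-2m-1)}{(m+1)(a-i+2m+1)(a-i+2m+2)},
\]
which, after factoring out the $2$'s with the help of the duplication identity $(\ref{eq:2})$, presents as a ${}_{3}F_{2}$ with numerator parameters $a,\,-i/2,\,-(i-1)/2$ and denominator parameters $(a-i+1)/2,\,(a-i+2)/2$. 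One of $-i/2,-(i-1)/2$ is a non-positive integer irrespective of the parity of $i$, and a quick check confirms the Pfaff--Saalsch\"utz balance condition $b_1 + b_2 - a_1 - a_2 - a_3 = 1$, so $(\ref{eq:pfaff})$ applies. Simplification of the resulting closed form, again via $(\ref{eq:1})$ and $(\ref{eq:2})$, returns $\binom{n-1}{i}$, which together with the prefactor $(-1)^i \binom{k}{i}\binom{n-1}{i}^{-1}$ pulled outside the sum yields $(-1)^i \binom{k}{i}$ as required.

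The main obstacle I expect is bookkeeping rather than insight: passing between binomials and Pochhammer symbols, tracking signs correctly, verifying the Pfaff--Saalsch\"utz balance, and simplifying its output back into a single binomial coefficient. Once each sum is correctly packaged as a hypergeometric series, each identity collapses under a single application of $(\ref{eq:chu})$ or $(\ref{eq:pfaff})$.
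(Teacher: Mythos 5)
Your proposal is correct and follows essentially the same route as the paper's proof: for $(\ref{eq:i})$ the shift $i = j+m$, identification of the sum as a ${}_2F_1(j-k,-j;j-n;1)$ and an application of Chu--Vandermonde; for $(\ref{secondsum})$ the shift $m = i - j$, the duplication identity $(\ref{eq:2})$ to package the sum as a balanced ${}_3F_2$ with parameters $n-i$, $-i/2$, $-(i-1)/2$; $(n-2i+1)/2$, $(n-2i+2)/2$, and then Pfaff--Saalsch\"utz. The intermediate term ratios and parameter identifications you write down are the ones the paper uses, so the two arguments coincide up to presentation.
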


\begin{proof}
	First note that both Equation~(\ref{eq:i}) and (\ref{secondsum}) are finite sums since $s_{ij}(k,n) = 0$
	whenever $j > i$ or $i > 2j$.
	
	To show Equation~\ref{eq:i} first perform an index shift followed by a re-arrangment of factorials resulting in
	\normalsize
	\begin{align*}
		\sum \limits_{i \geq 0} s_{i,j}(k,n) &\quad\stackrel{i \mapsto i+j}{=}\quad \sum \limits_{i=0}^{j} s_{i+j,j} \\
		&\quad=\quad (-1)^{j} {k \choose j} {n-1 \choose j}^{-1} {n-j \choose j}  
			{_2}F_1\left(\genfrac{}{}{0pt}{}{j-k,-j}{j-n}{};1\right) . \\ 
	\end{align*}
	\normalsize
	Now, by the Chu--Vandermonde sum (see Equation~(\ref{eq:chu}) or \cite[Corollary 2.2.3]{Andrews99SpecialFunctions})
	$$ {_2}F_1\left (\genfrac{}{}{0pt}{}{a,-n}{b};1\right ) =\frac{(b-a)_n}{(b)_n} $$
	the statement follows.
%
	\medskip
	As for Equation~(\ref{secondsum}) we have

	\begin{align*}
	\sum \limits_{j \geq 0} s_{i,j}(k,n) \quad&\stackrel{j\mapsto i-j}{=} \sum \limits_{j =0}^{\lfloor i/2 \rfloor} s_{i,i-j}(k,n) \\
	&\quad=\quad (-1)^{i} {n-1 \choose i}^{-1} {k \choose i} {n-i \choose i} 
			\sum \limits_{j \in \mathbb{Z}} \frac{(n-i)_j (-i)_{2j}}{j! (n-2i+1)_{2j}} \\
	&\quad=\quad (-1)^{i} {n-1 \choose i}^{-1} {k \choose i} {n-i \choose i}  \left \{
			\begin{array}{ll} 
				{_3}F_2\left (\genfrac{}{}{0pt}{}{n-i,-i/2,-(i-1)/2}{(1+n-2i)/2,(2+n-2i)/2}{};1\right ) 
				& \textrm{ if } i \equiv 1(2) \\
				{_3}F_2\left (\genfrac{}{}{0pt}{}{n-i,-(i-1)/2,-i/2}{(1+n-2i)/2,(2+n-2i)/2}{};1\right ) 
				& \textrm{ otherwise} \\
			\end{array} \right .
	\end{align*}
	where the last step follows from Equation~(\ref{eq:2}).

	Hence, we have a hypergeometric sum satisfying the pre-conditions of the Pfaff--Saalsch\"utz sum (see Equation~(\ref{eq:pfaff})
	or \cite[Theorem 2.2.6]{Andrews99SpecialFunctions}) stating that
	\begin{equation}\label{Saal}
		{_3}F_2\left(\genfrac{}{}{0pt}{}{a,\,\,\,b,\,\,\,-n}{d,1+a+b-n-d}{};1\right) =\frac{(d-a)_n(d-b)_n}{(d)_n(d-a-b)_n}
	\end{equation}
	whenever $n$ is a non-negative integer.

	\medskip	
	\noindent
	Now if $i$ is odd we set $n = (i-1)/2$, and when $i$ is even we set $n = i/2$ and by re-arranging factorials we can see that
	most terms cancel out and Equation~(\ref{secondsum}) follows.
\end{proof}

With these identities in mind, we can now prove the following.

\begin{lemma}
	\label{lem:fvecCycPol}
	The cyclic $(2 \ell +2)$-polytope with $n$ vertices $C_{2m} (n)$ has 
	\begin{equation}
		\label{eq:fvec}
		f_{i-1} (C_{2m} (n)) = \frac{n}{n-i} \sum \limits_{j = 0}^{\ell + 1} {n - j - 1 \choose i-j} {n-i \choose 2j -i}
	\end{equation}
	faces of dimension $(i-1)$.
\end{lemma}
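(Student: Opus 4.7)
The formula $(\ref{eq:fvec})$ is a classical expression for the $f$-vector of an even-dimensional cyclic polytope, and the plan is to split the argument at $i = m+1 = \ell+2$, exploiting the fact that the two defining structural features of the boundary of $C_{2m}(n)$ --- its $m$-neighbourliness, and the Dehn--Sommerville symmetry $h_k = h_{2m-k}$ for simplicial $(2m-1)$-spheres --- together determine the $f$-vector uniquely.

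For the range $i \leq m$, the $m$-neighbourliness of $C_{2m}(n)$ gives $f_{i-1}(C_{2m}(n)) = \binom{n}{i}$, so it suffices to show that the right-hand side of $(\ref{eq:fvec})$ evaluates to the same quantity. I would obtain this directly from Lemma~\ref{lem:sums}: dividing Equation~$(\ref{secondsum})$ through by the common factor $(-1)^{i}\binom{k}{i}\binom{n-1}{i}^{-1}$ yields
\begin{equation*}
\sum_{j \geq 0}\binom{n-j-1}{i-j}\binom{n-i}{2j-i} = \binom{n-1}{i}.
\end{equation*}
Since the summand vanishes for $j > i$ and we are in the regime $i \leq m$, the infinite sum coincides with the truncated sum in $(\ref{eq:fvec})$; multiplying by $\frac{n}{n-i}$ converts $\binom{n-1}{i}$ into $\binom{n}{i}$, as required.

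For the range $m < i \leq 2m$, I would invoke the explicit $h$-vector of $C_{2m}(n)$ obtained from shellability, namely $h_k = \binom{n-2m+k-1}{k}$ for $0 \leq k \leq m$, and $h_k = h_{2m-k}$ for $m < k \leq 2m$. Recovering $f_{i-1}$ through the standard relation $f_{i-1} = \sum_{k=0}^{i}\binom{2m-k}{i-k}h_k$, splitting the sum at $k = m$, and applying the substitution $k \mapsto 2m-k$ in the upper half produces a double sum indexed by $k = 0, \ldots, m$. Re-expressing the inner dependence on $k$ as a balanced $_3F_2$ series in the Pochhammer notation of Section~2.4, the Pfaff--Saalsch\"utz identity $(\ref{eq:pfaff})$ collapses this series to a product of Pochhammer symbols which, after reshaping the surrounding binomials (this is where the factor $\frac{n}{n-i}$ arises), matches the right-hand side of $(\ref{eq:fvec})$. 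The truncation at $j = m$ in $(\ref{eq:fvec})$ is thus not an ad hoc cutoff but rather the combinatorial imprint of the Dehn--Sommerville symmetry.

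The main obstacle will be the hypergeometric bookkeeping in the second case: locating the right index substitution that merges the two halves of the $h$-to-$f$ conversion into a single balanced $_3F_2$, correctly matching its parameters against those of $(\ref{eq:pfaff})$, and then extracting the factor $\frac{n}{n-i}$ from the resulting Pochhammer product. A possible alternative, avoiding some of this computation, would be to verify that the right-hand side of $(\ref{eq:fvec})$ satisfies both the $m$ neighbourliness identities and the $m$ independent Dehn--Sommerville equations and then to invoke uniqueness of the $f$-vector of $C_{2m}(n)$; but this route appears to require the same hypergeometric identities at a different point in the argument.
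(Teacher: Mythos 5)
Your argument for $i \leq m = \ell+1$ reproduces the paper's proof essentially verbatim: invoke $m$-neighbourliness to get $f_{i-1} = \binom{n}{i}$, observe that the summand vanishes outside $j \leq i \leq 2j$ so the truncated sum equals the full sum, divide Equation~(\ref{secondsum}) by $(-1)^i\binom{k}{i}\binom{n-1}{i}^{-1}$ to get $\sum_j \binom{n-j-1}{i-j}\binom{n-i}{2j-i} = \binom{n-1}{i}$, and absorb $\frac{n}{n-i}$. Where you diverge is the range $m < i \leq 2m$: the paper handles this with a one-line citation to the explicit $f$-vector formula for cyclic polytopes in Billera--Bj\"orner (Theorem 15.3.4 in that handbook chapter is exactly the closed form~(\ref{eq:fvec}) for the upper half of the $f$-vector), whereas you propose to rederive it from $h_k = \binom{n-2m+k-1}{k}$, Dehn--Sommerville folding, and Pfaff--Saalsch\"utz. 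That route is plausible and more self-contained, but you have left the key step --- packaging the folded sum $\sum_{k=0}^{m}\bigl[\binom{2m-k}{i-k}+\binom{k}{i-2m+k}\bigr]h_k$ (with the appropriate boundary correction at $k=m$) into a form that $(\ref{eq:pfaff})$ collapses --- as an unverified claim, and it is not a term-by-term match with the single sum in~(\ref{eq:fvec}), so real rearrangement is required. In short: same argument on the half where the paper does actual work, a more laborious but self-contained alternative on the half where the paper cites a standard reference. If you carry out the hypergeometric step, your version is a perfectly good substitute; as it stands it is a correct outline with one nontrivial computation deferred.
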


\begin{proof}
	If $i > \ell + 1$ this follows directly from \cite[Theorem 15.3.4]{Billera97FaceNumPolyCompl}.

	\medskip
	Now let $i \leq \ell + 1$, that is, we have to show that
	$$ \frac{n}{n-i} \sum \limits_{j = 0}^{\ell + 1} {n - j - 1 \choose i-j} {n-i \choose 2j -i} = {n \choose i}.  $$
	First note that the summands in Equation~(\ref{eq:fvec}) vanish unless
	$0 \leq j \leq i \leq 2j$. Thus, if $i \leq \ell + 1$ we have
	$$ 
		\sum \limits_{j = 0}^{\ell + 1} {n - j - 1 \choose i-j} {n-i \choose 2j -i} 
		= \sum \limits_{j \in \mathbb{Z}} {n - j - 1 \choose i-j} {n-i \choose 2j -i}.
	$$
	In addition, Lemma~\ref{lem:sums} states that
	$$\sum \limits_{j \in \mathbb{Z}} (-1)^i {k\choose i}{n-1 \choose i}^{-1} {n-j-1 \choose i-j}{n-i \choose 2j -i} = (-1)^i  {k\choose i}$$
	which is equivalent to
	\begin{equation}
		\label{eq:sumi}
		\sum \limits_{j \in \mathbb{Z}} {n-j-1 \choose i-j}{n-i \choose 2j -i} = {n-1 \choose i}
	\end{equation}
	and replacing the sum in Equation~(\ref{eq:fvec}) with the RHS of Equation~(\ref{eq:sumi}) we obtain the result.
\end{proof}

Now note that for $\mathbb{F}$-orientable $(\ell-1)$-connected $(2\ell +1)$-manifolds $M$
we have $\beta_1 (M,\mathbb{F}) = \beta_2 (M,\mathbb{F}) = \ldots = \beta_{\ell-1} (M\mathbb{F}) = 0$.
Furthermore, by Proposition~\ref{prop:injHom} we know
that $\beta_{\ell} (S,\mathbb{F}) \leq \beta_{\ell} (M\mathbb{F}) + \beta_{\ell+1} (M,\mathbb{F}) = 2 \beta_{\ell} (M,\mathbb{F})$
and for any slicing $S \subset M$ we have $\chi (S) \geq 2 - \beta_{\ell} (S,\mathbb{F})$ if $\ell$ 
is odd and $\chi (S) \leq 2 + \beta_{\ell} (S,\mathbb{F})$ if $\ell$ is even. 

All together this results in
	$$ \beta_{\ell} (M,\mathbb{F}) = \beta_{\ell+1} (M,\mathbb{F}) \geq 
		\left \{ \begin{array}{ll} \chi (S) / 2 - 1 & \textrm{ if } \ell \textrm{ is even } \\  
		1 - \chi (S) / 2 & \textrm{ else }  \end{array} \right . $$
and thus the inequality
	\begin{align*} 
		\beta_{\ell} (M,\mathbb{F}) = \beta_{\ell+1} (M,\mathbb{F}) \geq 
			(-1)^{\ell+1} \left ( 1- \bar{\chi}_{k}/2 \right ) &\quad=\quad (-1)^{\ell+1} 
			\left ( 1 - \frac12 \left ( \sum \limits_{i=0}^{2 \ell+1} (-1)^{i} \frac{f_{i}}{{n \choose i+1}} 
			\left [ {k \choose i+1} + {n-k \choose i+1} \right ] \right ) \right )\\
		&\quad=\quad \frac{(-1)^{\ell+1}}{2} \sum \limits_{i=0}^{2 \ell+2} 
			\sum \limits_{j=0}^{\ell+1} (-1)^{i} \frac{{n-1-j \choose i-j} {n - i \choose 2j - i}}{{n-1 \choose i}} 
			\left [ {k \choose i} + {n-k \choose i} \right ] \\
		&\quad=\quad \frac{(-1)^{\ell+1}}{2} \sum \limits_{i=0}^{2 \ell+2} 
			\sum \limits_{j=0}^{\ell+1} \left [ s_{i,j}(k,n) + s_{i,j}(n-k,n) \right ] , \\
	\end{align*}
with $s(k,n)$ as defined in Lemma~\ref{lem:sums}, must hold for all $k \leq n/2$.

\begin{proof}[Proof of Theorem~\ref{thm:dimOdd}]
	By the above calculations and Lemma~\ref{lem:fvecCycPol} it suffices to show that
	$$ 
		\sum \limits_{i=0}^{2 \ell+2} \sum \limits_{j=0}^{\ell+1} s_{i,j}(k,n) = 
		\frac{ (1-k)_{\ell+1} (1-n+k)_{\ell+1} }{ (\ell+1)! \,\, (1-n)_{\ell+1}} 
	$$
	for all $k \leq n/2$. Applying Equation~(\ref{eq:i}) 
	from Lemma~\ref{lem:sums} and letting $c$ tend to $-n$ in the 
	Pfaff--Saalsch\"utz sum Equation~(\ref{Saal}) we have
	\begin{align*}
		\sum \limits_{i=0}^{2 \ell+2} \sum \limits_{j=0}^{\ell+1} s_{i,j}(k,n) &\quad=\quad 
			\sum \limits_{j \geq 0} \sum \limits_{i \geq 0} s_{i,j}(k,n) \\
		&\quad=\quad \sum \limits_{j \geq 0} (-1)^j {k \choose j}{n-k \choose j}{n-1 \choose j}^{-1} \\
		&\quad=\quad \frac{(1-k)_{\ell+1} (1-n+k)_{\ell+1}}{(\ell+1)! (1-n)_{\ell+1}}.
	\end{align*}

	Note that the above statement is most restrictive in the case $k = \lfloor n/2 \rfloor$. 
	Finally, multiplying the equation by $-1$ whenever $\ell$ is even completes the proof.
\end{proof}

In dimensions $d=2\ell + 1$, $1 \leq \ell \leq 15$, 
Theorem~\ref{thm:dimOdd} translates to the following upper bounds 
for vertex numbers of tight $(\ell-1)$-connected $d$-manifolds.

\medskip
\begin{center}
  \begin{tabular}{|c|@{\hspace{2.5mm}}c@{\hspace{2.5mm}}|@{\hspace{1mm}}c@{\hspace{1mm}}|@{\hspace{1mm}}c@{\hspace{1mm}}|@{\hspace{1mm}}c@{\hspace{1mm}}|@{\hspace{1mm}}c@{\hspace{1mm}}|@{\hspace{1mm}}c@{\hspace{1mm}}|@{\hspace{1mm}}c@{\hspace{1mm}}|@{\hspace{1mm}}c@{\hspace{1mm}}|@{\hspace{1mm}}c@{\hspace{1mm}}|@{\hspace{1mm}}c@{\hspace{1mm}}|@{\hspace{1mm}}c@{\hspace{1mm}}|@{\hspace{1mm}}c@{\hspace{1mm}}|@{\hspace{1mm}}c@{\hspace{1mm}}|@{\hspace{1mm}}c@{\hspace{1mm}}|@{\hspace{1mm}}c@{\hspace{1mm}}|}
	\hline
	\backslashbox{$\beta_{\ell} (M)$}{$d$}&$3$&$5$&$7$&$9$&$11$&$13$&$15$&$17$&$19$&$21$&$23$&$25$&$27$&$29$&$31$ \\
	\hline \hline
	$0$&{\bf5}&{\bf7}&{\bf9}&{\bf11}&{\bf13}&{\bf15}&{\bf17}&{\bf19}&{\bf21}&{\bf23}&{\bf25}&{\bf27}&{\bf29}&{\bf31}&{\bf33} \\
	\hline
	$1$&$10$&{\bf 13}&$16$&$19$&$22$&$25$&$28$&$31$&$34$&$37$&$40$&$43$&$46$&$49$&$52$ \\
	\hline
	$2$&$12$&$15$&$17$&$20$&$23$&$26$&$29$&$32$&$35$&$39$&$41$&$45$&$48$&$51$&$54$ \\
	\hline
	$3$&$14$&$16$&$19$&$21$&$24$&$27$&$30$&$33$&$36$&--\textquotedbl--&$42$&--\textquotedbl--&--\textquotedbl--&--\textquotedbl--&--\textquotedbl-- \\
	\hline
	$4$&$15$&$17$&--\textquotedbl--&$22$&$25$&$28$&$31$&$34$&$37$&$40$&$43$&$46$&$49$&$52$&$55$ \\
	\hline
	$5$&$17$&$18$&$20$&$23$&$26$&$29$&--\textquotedbl--&--\textquotedbl--&--\textquotedbl--&--\textquotedbl--&--\textquotedbl--&--\textquotedbl--&--\textquotedbl--&--\textquotedbl--&--\textquotedbl-- \\
	\hline
	$6$&$18$&$19$&$21$&--\textquotedbl--&--\textquotedbl--&--\textquotedbl--&$32$&$35$&$38$&$41$&$44$&$47$&$50$&$53$&$56$ \\
	\hline
	$7$&$19$&--\textquotedbl--&--\textquotedbl--&$24$&$27$&--\textquotedbl--&--\textquotedbl--&--\textquotedbl--&--\textquotedbl--&--\textquotedbl--&--\textquotedbl--&--\textquotedbl--&--\textquotedbl--&--\textquotedbl--&--\textquotedbl-- \\
	\hline
	$8$&$20$&$20$&$22$&--\textquotedbl--&--\textquotedbl--&$30$&$33$&--\textquotedbl--&--\textquotedbl--&--\textquotedbl--&--\textquotedbl--&--\textquotedbl--&--\textquotedbl--&--\textquotedbl--&--\textquotedbl-- \\
	\hline
	$9$&$21$&$21$&--\textquotedbl--&$25$&--\textquotedbl--&--\textquotedbl--&--\textquotedbl--&$36$&$39$&$42$&--\textquotedbl--&--\textquotedbl--&--\textquotedbl--&--\textquotedbl--&--\textquotedbl-- \\
	\hline
	$10$&$22$&--\textquotedbl--&$23$&--\textquotedbl--&$28$&--\textquotedbl--&--\textquotedbl--&--\textquotedbl--&--\textquotedbl--&--\textquotedbl--&$45$&$48$&$51$&$54$&$57$ \\
	\hline
  \end{tabular}
\end{center}

\noindent
--\textquotedbl-- denotes the same value as above.

\noindent
Bold entries denote cases where tight triangulations exist.

\medskip
In particular, the bound of Theorem~\ref{thm:dimOdd} is attained for $d=5$ and $\beta_2(M) = 1$
as there is a tight $13$-vertex triangulation of $SU(3) / SO(3)$ described in \cite{Kuehnel99CensusTight}.

\begin{remark}
	$(\ell-1)$-connected $(2\ell +1)$-manifolds are less 
	restrictive than an $(\ell-1)$-connected $2\ell$-manifold. This
	is most apparent in the case of $\ell=1$ where 
	Theorem~\ref{thm:dimOdd} holds for all connected 
	three-manifolds (cf. Section~\ref{sec:sigma} where we will deal
	with the three-dimensional case in more detail). This is of particular interest as dimension
	three is the unique non-trivial odd dimension where Effenberger's
	tightness criterion \cite{Effenberger09StackPolyTightTrigMnf} 
	cannot be applied.
\end{remark}

Following Theorem~\ref{thm:dimOdd} a tight $n$-vertex $(\ell -1)$ connected 
$(2 \ell + 1)$-manifold $M$ must satisfy
\begin{equation}
	\label{eq:myCriterion}
	\beta_{\ell} (M,\mathbb{F}) \geq \frac{n^{\ell+1}}{4^{\ell+1} (\ell+1)!} + o (n^{\ell + 1}) .
\end{equation}
For three-manifolds we thus have $\beta_{1} (M,\mathbb{F}) \geq \frac{n^2}{32} + o(n^2)$. On the other hand it follows
from \cite{Novik08SocBuchsMod} and \cite[Theorem 5]{Lutz08FVec3Mnf} 
that $\beta_{1} (M,\mathbb{F}) \leq \frac{n^2}{20} + o(n^2)$ for all combinatorial
three-manifolds. Thus, as $n$ tends to infinity, the number of vertices of a tight combinatorial three-manifold $M$ cannot be greater
than $\sqrt{1.6} \simeq 1.265$ times the minimum number of vertices needed to triangulate
any manifold $N$ with first Betti number $\beta_1 (M,\mathbb{F})$. In particular, the conjecture by Lutz and K\"uhnel
stating that tight combinatorial manifolds are strongly minimal \cite[Conjecture 1.3]{Kuehnel99CensusTight} cannot be far from 
being true in dimension three.


\begin{remark}[Comparison with Bagchi and Datta's tightness criterion]
In \cite{Bagchi14StellSpheresTightness} the authors give a tightness 
criterion for all combinatorial manifolds in Walkup's class
$\mathcal{W}^{\star}_k(d)$. In the $(\ell-1)$-dimensional $(2 \ell + 1)$-dimensional case these $n$-vertex combinatorial
manifolds must satisfy 
$$\beta_{\ell} (M,\mathbb{F}) = \frac{{n-\ell -3 \choose \ell + 1}}{{2\ell + 3 \choose \ell + 1}} 
\simeq \frac{(\ell + 2)! \,\, n^{\ell + 1}}{(2 \ell + 3)!} + o (n^{\ell}). $$
Comparing this to Equation~(\ref{eq:myCriterion}) we get that, for $n$ tending to infinity, any tight combinatorial $(\ell -1)$-connected
$(2 \ell + 1)$-manifold cannot have more than
$$ \sqrt[\ell+1]{\frac{4^{\ell + 1} (\ell +1)! (\ell + 2)!}{(2 \ell + 3)!}} \quad=\quad 4 \cdot \exp^{\frac{1}{l+1} \left [\ln (\ell+1)! + \ln (\ell+2)! - \ln (2 \ell+3)! \right ]} $$
times the vertices than any tight combinatorial $(\ell -1)$-connected
$(2 \ell + 1)$-manifold in $\mathcal{W}^{\star}_k(d)$. And since
\begin{align*}
	\ln (\ell+1)! + \ln (\ell+2)! - \ln (2 \ell+3)! &\quad=\quad \sum \limits_{x=1}^{\ell+1} \ln (x) + \sum \limits_{x=1}^{\ell+2} \ln (x) - \sum \limits_{x=1}^{2\ell+3} \ln (x) \\
	&\quad=\quad \sum \limits_{x=1}^{\ell+2} \ln (x) - \sum \limits_{x=1}^{\ell+1} \ln (x + 1/2) - (\ell + 1) \ln (4) \\
	&\quad<\quad \ln (\ell+2) - (\ell + 1) \ln (4),
\end{align*}
we get 
$$ \sqrt[\ell+1]{\frac{4^{\ell + 1} (\ell +1)! (\ell + 2)!}{(2 \ell + 3)!}} \quad \overset{\ell \to \infty}{\longrightarrow} \quad 1 .$$

Hence, Theorem~\ref{thm:dimOdd} gives a necessary condition for the tightness of arbitrary combinatorial
$(\ell -1)$-connected $(2 \ell + 1)$-manifolds which asymptotically (in $n$ then $\ell$) tends to Bagchi and Datta's 
tightness criterion for triangulations in $\mathcal{W}^{\star}_k(d)$.
\end{remark}

\section{Three-manifolds with small first Betti number}
\label{sec:sigma}

In this section we will focus on combinatorial three-manifolds
with low first Betti numbers, proving that the only tight combinatorial
three-manifolds with first Betti number at most two are the five-vertex 
boundary of the four-simplex and the nine-vertex triangulation
of the three-dimensional Klein bottle $S^2 \dtimes S^1$ \cite{Kuehnel95TightPolySubm}.

\medskip
In \cite{Bagchi14StellSpheresTightness} a tightness criterion for combinatorial
$d$-manifolds $M$ is given based on calculations in the vertex links
of $M$. More precisely, a condition on the {\em $\sigma$-vectors}
of the vertex links is given for $M$ to be tight. These
are defined as follows.

\begin{definition}[$\sigma$-vector]
	Let $M$ be a combinatorial $d$-manifold with vertex set $V(M) = \{ v_1 , \ldots , v_n \}$.
	The $\sigma$-vector $\sigma(M) = (\sigma_0 , \ldots \sigma_d)$ of $M$ is defined by
	$$ \sigma_i \quad = \quad \sum \limits_{j=0}^{n} \frac{1}{{n \choose j}} \sum \limits_{\footnotesize \begin{array}{c} W \subset V(M), \\ |W| = j \end{array} \normalsize} \tilde{\beta}_i (M[W]), \qquad 
		0 \leq i \leq d .$$ 
\end{definition}

For $d=3$, tight combinatorial manifolds must be two-neighbourly and
any two-neighbourly combinatorial three-manifold is tight if and only if it
is one-tight \cite[Proposition 3.18]{Kuehnel95TightPolySubm},
\cite[Proposition 2.9(b)]{Bagchi14StellSpheresTightness}. 
Thus, in dimension three we have the following tightness 
criterion.

\begin{prop}[Corollary of Theorem 2.10 \cite{Bagchi14StellSpheresTightness}]
	\label{prop:bd}
	Let $M$ be a combinatorial three-manifold with vertex set $V$. Then
	$M$ is $\mathbb{F}$-tight if and only if $M$ is two-neighbourly and
	\begin{equation}\label{eq:bd}
		\frac{1}{|V|} \sum_{v \in V} \sigma_0 (\operatorname{lk}_M(v)) = \beta_1(M,\mathbb{F})-1,
	\end{equation}
	that is, the average value for $\sigma_0$ over all vertex links must equal the first Betti number
	minus one. 
\end{prop}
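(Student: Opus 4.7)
The plan is to derive this as a direct corollary of the general Bagchi--Datta tightness criterion (Theorem 2.10 in \cite{Bagchi14StellSpheresTightness}), combined with the reduction (stated just before the proposition) that a combinatorial three-manifold is $\mathbb{F}$-tight if and only if it is two-neighbourly and 1-tight with respect to $\mathbb{F}$. The forward direction of this reduction uses K\"uhnel's result that $\mathbb{F}$-tight combinatorial manifolds are $\lceil(d+1)/2\rceil$-neighbourly; the backward direction uses that 0-tightness follows from connectedness plus two-neighbourliness, and 2-tightness follows from 1-tightness via Poincar\'e--Lefschetz duality on the $\mathbb{F}$-orientable closed three-manifold $M$. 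Hence it suffices to characterize 1-tightness of two-neighbourly combinatorial three-manifolds.

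Next, I would invoke the Bagchi--Datta inequality $\sigma_k(M) \geq \beta_k(M, \mathbb{F})$, valid for any simplicial complex, with equality if and only if $M$ is $k$-tight. Taking $k = 1$ reduces 1-tightness to the single equation $\sigma_1(M) = \beta_1(M, \mathbb{F})$. The substantive task is then to rewrite $\sigma_1(M)$ in terms of vertex link data. I would establish the link identity
$$\sigma_1(M) \;=\; 1 + \frac{1}{|V(M)|}\sum_{v \in V(M)} \sigma_0(\operatorname{lk}_M(v))$$
by a double-counting argument: each pair $(W, v)$ with $v \in W \subseteq V(M)$ corresponds to a subset $W \setminus \{v\} \subseteq V(\operatorname{lk}_M(v))$, and $\tilde\beta_0(\operatorname{lk}_{M[W]}(v))$ relates to $\tilde\beta_1(M[W])$ through a Mayer--Vietoris argument applied to the cover of $M[W]$ by closed vertex stars (of which there are $|W|$). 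The binomial weight $\binom{n-1}{|W|-1}$ natural on the link side combines with $\binom{n}{|W|}$ on the $M$ side via the identity $n\binom{n-1}{j-1} = j\binom{n}{j}$ to yield the stated averaging. Substituting this identity into $\sigma_1(M) = \beta_1(M, \mathbb{F})$ produces precisely Equation~(\ref{eq:bd}).

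The main obstacle is the careful bookkeeping in the link identity, in particular fixing the additive constant $+1$ and the empty-subset contribution. Under the convention $\tilde\beta_0(\emptyset) = -1$ used by Bagchi--Datta, each vertex link contributes $-1$ from the $W = \emptyset$ term, which explains the offset between $\beta_1$ and $\beta_1 - 1$ in the statement. As a sanity check, for $M = \partial\Delta^4$ one has $\beta_1 = 0$, each vertex link is $\partial\Delta^3$ whose only nontrivial induced-subcomplex contribution to $\sigma_0$ comes from the empty set, and the identity correctly yields $\sigma_1(M) = 0 = \beta_1(M, \mathbb{F})$; similarly, for the nine-vertex Klein bottle $S^2 \dtimes S^1$ with $\beta_1 = 1$ one must get average $\sigma_0(\operatorname{lk}_M(v)) = 0$, which is the content of the equation after substitution.
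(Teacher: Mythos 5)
The paper offers no proof of this proposition; it is stated as a direct corollary of Bagchi and Datta's Theorem~2.10, combined with the cited fact that a two-neighbourly combinatorial three-manifold is tight if and only if it is $1$-tight. So your task is really to reconstruct the content of the cited result, and there is a concrete gap in your second step: the assertion that $\sigma_k(M) \geq \beta_k(M,\mathbb{F})$ with equality if and only if $M$ is $k$-tight is false, and consequently so is the proposed ``link identity'' $\sigma_1(M) = 1 + \frac{1}{|V|}\sum_v \sigma_0(\operatorname{lk}_M(v))$. The inequality $\sigma_k(M)\geq\tilde\beta_k(M)$ is essentially trivial (the $W=V(M)$ term already contributes $\tilde\beta_k(M)$ and every other term is non-negative), but equality would force $\tilde\beta_k(M[W]) = 0$ for every proper $W\subsetneq V(M)$, which is vastly stronger than $k$-tightness. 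The paper's own running example disproves your claim: for the tight nine-vertex $S^2\dtimes S^1$ one has $\beta_1 = 1$, but each of the nine eight-vertex induced sub-complexes $M[V\setminus\{v\}]$ deformation retracts to $M$ minus an open ball and hence has $\tilde\beta_1 = 1$; so $\sigma_1(M)\geq 1 + 9\cdot\tfrac19 = 2 > \beta_1(M)$, while $1 + \tfrac19\sum_v\sigma_0(\operatorname{lk}_M(v)) = 1 + 0 = 1$.

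The quantity Bagchi--Datta actually tie to Betti numbers is the $\mu$-vector, not the $\sigma$-vector: $\mu_k(M)$ is the average, over all vertex orderings, of the number of index-$k$ critical points (with multiplicity) of the associated rsl-function, and it is $\mu_k(M)\geq\beta_k(M,\mathbb{F})$ that holds with equality for all $k$ characterizing $\mathbb{F}$-tightness (this is just Definition~\ref{def:tight3} averaged). The double-counting you sketch is exactly right, but it computes $\mu_1$, not $\sigma_1$: pairing each $(W,v)$ with $v\in W$, using $\dim H_1(M[W],M[W\setminus\{v\}]) = \tilde\beta_0\bigl(\operatorname{lk}_{M[W]}(v)\bigr) = \tilde\beta_0\bigl((\operatorname{lk}_M(v))[W\setminus\{v\}]\bigr)$ and $j\binom{n}{j}=n\binom{n-1}{j-1}$, one gets, for two-neighbourly $M$ and with the convention $\tilde\beta_0(\emptyset)=-1$, the identity $\mu_1(M) = 1 + \frac{1}{|V|}\sum_v \sigma_0(\operatorname{lk}_M(v))$. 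Replacing $\sigma_1$ by $\mu_1$ throughout, your chain of reductions becomes correct: $M$ tight $\Leftrightarrow$ $M$ two-neighbourly and $1$-tight $\Leftrightarrow$ $M$ two-neighbourly and $\mu_1(M)=\beta_1(M,\mathbb{F})$, which is Equation~(\ref{eq:bd}).
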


However, not all combinations of vertex links satisfying Equation~(\ref{eq:bd}) have a chance of being the
set of links of a tight combinatorial three-manifold. Thus, we need to take a closer look at
some essential properties of vertex links of tight combinatorial three-manifolds.

\begin{lemma}[Property $T_k$]
	Let $S$ be an $n$-vertex triangulation of a two-sphere with one-skeleton $G = \operatorname{skel}_1 (S)$
	which occurs as a link of a tight combinatorial three-manifold $M$ with $k = \beta_1 (M,\mathbb{F})$, 
	then the following conditions must be satisfied for at least one fixed field $\mathbb{F}$.
	\begin{enumerate}[(i)]
		\item $k$ satisfies the condition given by Theorem~\ref{thm:dimOdd} for $n+1$ vertices,
		\item $G$ does not have an independent set of size $k+2$, and
		\item $G$ does not have an induced subgraph with six vertices and $k+1$ connected components.
	\end{enumerate}
	If $S$ satisfies all of the above properties it is said to have {\em Property $T_k$}.
\end{lemma}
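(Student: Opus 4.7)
The plan is to verify the three conditions $(i)$, $(ii)$, $(iii)$ in turn. All three exploit the induced-subcomplex formulation of tightness of $M$, together with the fact that a tight combinatorial manifold is two-neighbourly, so that every pair of vertices of $M$ spans an edge.

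Condition $(i)$ is immediate from Theorem~\ref{thm:dimOdd}: two-neighbourliness forces the link $S = \operatorname{lk}_M(v)$ to contain every vertex of $M$ other than $v$, so $|V(M)| = n+1$; and since a connected $3$-manifold is $(\ell-1)$-connected for $\ell = 1$, Theorem~\ref{thm:dimOdd} applies to $M$ and yields precisely the stated inequality on $k = \beta_1(M,\mathbb{F})$.

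For condition $(ii)$ I would argue by contradiction. Let $W \subset V(S)$ be an independent set in $G$ with $|W| = k+2$ and set $W' := W \cup \{v\}$. Decompose $M[W'] = M[W] \cup (v \ast S[W])$, where $v \ast S[W]$ is the closed star of $v$ in $M[W']$ and the intersection of the two pieces equals $\operatorname{lk}_{M[W']}(v) = S[W]$. The independence of $W$ in $G$ makes $S[W]$ a zero-dimensional complex on $k+2$ isolated points, so $\tilde{H}_0(S[W]) \cong \mathbb{F}^{k+1}$ and $\tilde{H}_1(S[W]) = 0$; two-neighbourliness forces $M[W]$ to be connected; and $v \ast S[W]$ is contractible. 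The reduced Mayer--Vietoris sequence therefore collapses to
\[
0 \longrightarrow \tilde{H}_1(M[W]) \longrightarrow \tilde{H}_1(M[W']) \longrightarrow \mathbb{F}^{k+1} \longrightarrow 0,
\]
whence $\beta_1(M[W']) \geq k+1$. This contradicts the tightness injectivity $\beta_1(M[W']) \leq \beta_1(M,\mathbb{F}) = k$.

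For condition $(iii)$ the same Mayer--Vietoris setup with $|W| = 6$ and $c = k+1$ connected components of $S[W]$ only yields $\beta_1(M[W']) \geq c-1 = k$, which on its own is just compatible with tightness. To extract the missing unit I would couple this with the parallel injectivity $\beta_2(M[W']) \leq \beta_2(M,\mathbb{F}) = k$ coming from Poincar\'e duality, the vanishing $\tilde{H}_2(S[W]) = 0$ (because $S$ is a $2$-sphere and $S[W] \subsetneq S$), and a planarity/Euler-characteristic bound on $\beta_1(S[W])$ that controls the rank of the connecting map $\tilde{H}_1(S[W]) \to \tilde{H}_1(M[W])$. A simultaneous analysis of the $H_1$- and $H_2$-parts of the Mayer--Vietoris sequence should then push either $\beta_1(M[W'])$ or $\beta_2(M[W'])$ strictly above $k$, producing the required contradiction.

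The main obstacle is exactly this sharpening in $(iii)$: the raw $H_0$-based Mayer--Vietoris estimate falls one short of the contradiction one needs, so the argument really has to use that $S$ is a $2$-sphere rather than an arbitrary complex, and must couple the $H_1$- and $H_2$-injectivity statements on $M[W']$. Parts $(i)$ and $(ii)$ are by contrast routine: $(i)$ is a direct invocation of the main theorem and $(ii)$ is a single clean reduced Mayer--Vietoris computation.
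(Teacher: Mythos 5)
Parts (i) and (ii) of your proposal are sound. Part (i) is exactly the paper's argument. Part (ii) replaces the paper's explicit rsl-function construction with a Mayer--Vietoris computation on $M[W']$, which is a legitimate alternative: the paper instead takes $g$ with $g(w) < g(v)$ for all $w \in W$ and $g(v) < g(u)$ for the rest, and reads off that $v$ is critical of index one with multiplicity $k+1$ because its lower link $S[W]$ consists of $k+2$ isolated points; you reach the same contradiction ($\beta_1(M[W']) > k$) from the induced-subcomplex characterisation of tightness. Either route is clean, and yours makes the rank count particularly transparent.

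Part (iii), however, has a genuine gap which you yourself flag, and the remedy you sketch is not the one that works. Your Mayer--Vietoris estimate correctly gives only $\beta_1(M[W']) \geq k$, but coupling this with $\beta_2$ via Poincar\'e duality and an Euler-characteristic bound on $\beta_1(S[W])$ keeps you looking at the single link $S = \operatorname{lk}_M(v)$, and there is no reason this yields the missing unit. The paper's key idea is to bring in the planarity of a \emph{second} vertex link: pick $w_1 \in W$ and let $G' = \operatorname{skel}_1(\operatorname{lk}_M(w_1))$. Since $G'$ is planar it contains no $K_5$, so the induced subgraph $G'[W\setminus\{w_1\}]$ on five vertices has a missing edge, say $\{w_2, w_3\}$. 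One then constructs the specific rsl-function $g$ with $g(w_2) < g(w_3) < g(w_1) < g(w_4), g(w_5), g(w_6) < g(v) < \cdots$, and reads off two separate contributions at index one: the lower link of $w_1$ is the two isolated points $w_2, w_3$ (multiplicity one), and the lower link of $v$ is $S[W]$ with $k+1$ components (multiplicity $k$). Thus $g$ has at least $k+1$ index-one critical points counted with multiplicity, so it is not perfect and $M$ is not tight. This extra unit comes from a tailored ordering of $W$ informed by a different vertex link's planarity --- information the Mayer--Vietoris sequence on $M[W']$ alone cannot see.
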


Note that {\em (ii)} and {\em (iii)} are independent of the choice of $\mathbb{F}$.

\begin{proof}
	For {\em (i)} note that the two-neighbourliness condition implies that any tight combinatorial
	manifold $M$ with $n$-vertex vertex links has $n+1$ vertices. Then apply Theorem~\ref{thm:dimOdd}.

	For {\em (ii)} note that any independent set of size $k+2$ in the one-skeleton $G$ of a vertex link 
	of some vertex $v \in V(M)$ gives rise to an rsl-function with a critical point of index one and 
	multiplicity $k+1$: define $g : M \to \mathbb{R}$ such that $g(w) < g(v)$ for
	all vertices $w$ in the independent set and  $g(v) < g(u)$ for all other vertices. Now 
	$g$ clearly has more than $k$ critical points of index one counted by multiplicity. Hence, $g$
	is not perfect and $M$ cannot be tight.

	For {\em (iii)} let $G$ be the $1$-skeleton of the vertex link of $v \in V(M)$, and let $W \subset V(M) \setminus \{v\}$,
	$W = \{ w_1 , w_2 , \ldots , w_6 \}$, such that $G[W]$ has $k+1$ connected components. Now let $G'$ be
	the $1$-skeleton of the vertex link of $w_1$ in $M$. Since $G'$ is planar, it does not contain a complete
	graph with five vertices and hence there must be a missing edge in the induced subgraph $G' [W \setminus \{w_1\}]$.
	Without loss of generality, let $\{ w_2, w_3\}$ be that missing edge.
	
	With this setup in mind choose an rsl-function of $g : M \to \mathbb{R}$ such that
	$$g (w_2) < g(w_3) < g(w_1) < g(w_4), g(w_5), g(w_6) < g(v) < \ldots .$$
	It follows from the construction that $w_1$ is critical of index one and multiplicity one and 
	$v$ is critical of index one and multiplicity $k$ and $M$ cannot be tight. 
\end{proof}

Now in the three-dimensional case Theorem~\ref{thm:dimOdd} and
\cite[Theorem 5]{Lutz08FVec3Mnf} give us the following upper and lower bounds on 
the vertex numbers of tight combinatorial three-manifolds with prescribed first Betti number.

\medskip
\begin{center}
  \begin{tabular}{|c|c|c|}
	\hline
	$\beta_1 (M,\mathbb{F})$&lower b.&upper b. \\
	\hline \hline
	$0$&{\bf 5}&{\bf 5} \\
	\hline
	$1$&{\bf 9}&$10$ \\
	\hline
	$2$&$11$&$12$ \\
	\hline
	$3$&$13$&$14$ \\
	\hline
	$4$&$14$&$15$ \\
	\hline
	$5$&$15$&$17$ \\
	\hline
	$6$&$16$&$18$ \\
	\hline
	$7$&$17$&$19$ \\
	\hline
	$8$&$18$&$20$ \\
	\hline
	$9$&$18$&$21$ \\
	\hline
	$10$&$19$&$22$ \\
	\hline
	$11$&$20$&$23$ \\
	\hline
	$12$&$20$&$24$ \\
	\hline
  \end{tabular}
\end{center}

\noindent
Bold entries denote cases where tight triangulations exist.

\medskip
In the following we will use these bounds together with Property $T_k$, $k \leq 2$,
the classification of sphere triangulations up to $11$ vertices (triangulations are taken from \cite{Lutz08ManifoldPage}),
as well as the classification of three-manifold triangulations up to $11$ vertices \cite{Sulanke09IsoFreeEnumeration}
to give an alternative classification of tight combinatorial three-manifolds with first Betti number
at most one. Furthermore, we will show that there are no tight combinatorial three-manifolds $M$ with
$\beta_1 (M,\mathbb{F}) =2$ for all fields $\mathbb{F}$.

\subsubsection*{The case $\beta_1 (M,\mathbb{F}) = 0$}

A tight combinatorial three-manifold with vanishing first Betti number must be three-neighbourly (otherwise
consider an rsl-function separating a minimal missing triangle from the rest of the triangulation).
It follows immediately that the only tight combinatorial homology three-sphere is the boundary of the simplex.

\subsubsection*{The case $\beta_1 (M,\mathbb{F}) = 1$}

The case $\beta_1(M,\mathbb{F}) = 1$ is due to K\"uhnel \cite[Theorem 5.3]{Kuehnel95TightPolySubm}.
Alternatively, it can also be followed from Theorem~\ref{thm:dimOdd} and the
classification of combinatorial three-manifolds up to $11$ vertices \cite{Sulanke09IsoFreeEnumeration}.
Here, we will give yet another proof using Theorem~\ref{thm:dimOdd} and
the (much smaller) classification of two-sphere triangulations up to nine vertices.

\medskip
Recall that in a tight $n$-vertex combinatorial three-manifold all vertex links have to be $(n-1)$-vertex 
two-sphere triangulations. Now, following Theorem~\ref{thm:dimOdd}, a tight combinatorial three-manifold 
with $\beta_1 (M,\mathbb{F}) = 1$ needs to have either nine or ten vertices. 

\medskip
\noindent
{\bf Case $n= 9$}: We have $14$ triangulations of the two-sphere with eight vertices
with the following $\sigma_0$-values.

\begin{center}
  \begin{tabular}{|c|c|c|}
	\hline
	$\sigma_0$&$\approx \sigma$&\# two-sphere triangulations \\
	\hline
	\hline
$-2/7$&$-0.2857$&$1$\\
\hline
$-8/35$&$-0.2285$&$1$\\
\hline
$-27/140$&$-0.1928$&$1$\\
\hline
$-9/70$&$-0.1285$&$4$\\
\hline
$0$&$0$&$7$\\
	\hline
	\hline
	& Total:& $14$ \\
	\hline
  \end{tabular}
\end{center}

Hence, to satisfy Equation~(\ref{eq:bd}) only the seven triangulations with $\sigma_0$-value
equal to zero can be considered (these are precisely the seven stacked eight-vertex two-spheres,
see \cite{Burton14SepIndex2Spheres} for a more general observation on the $\sigma_0$-value of 
two-sphere triangulations). Amongst these seven triangulations only the triangulation
presented in Figure~\ref{fig:lktight} satisfies Property $T_1$ and thus any tight combinatorial three-manifold $M$
with $\beta_1 (M,\mathbb{F}) = 1$ must have nine isomorphic vertex links of that type. 
By virtue of the enumeration algorithm presented below, this
leads to the unique nine-vertex triangulation of the three-dimensional Klein bottle.
\begin{figure}
	\begin{center}
		\includegraphics[width=.5\textwidth]{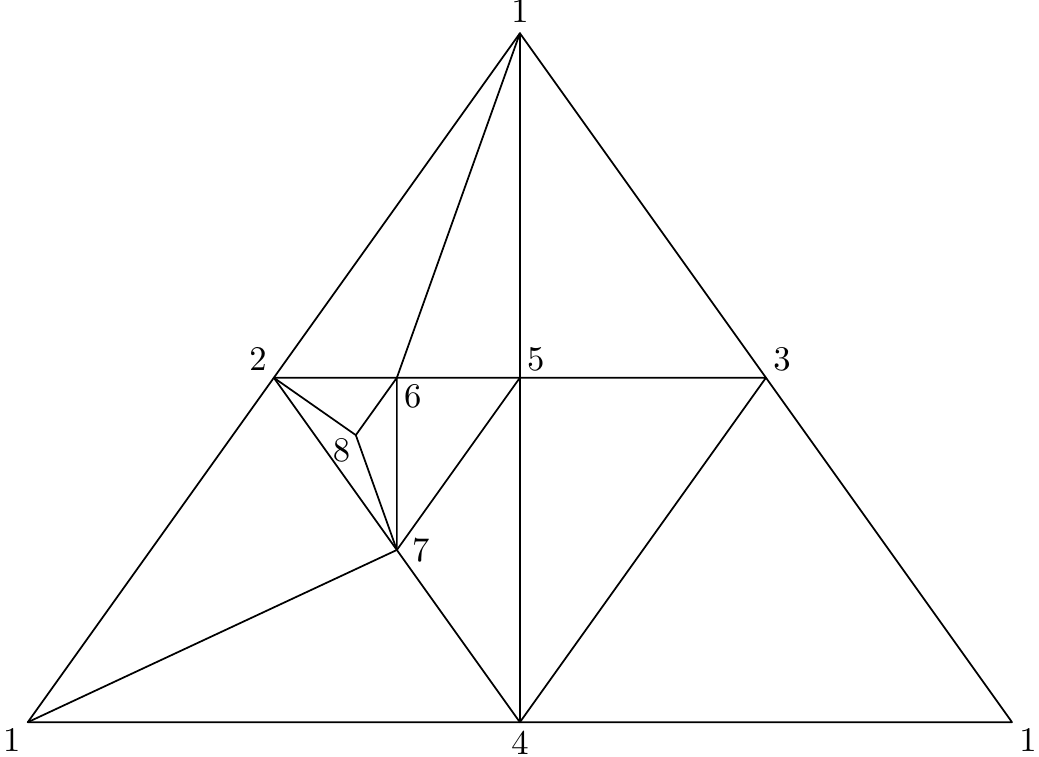}
	\end{center}
	\caption{The unique eight-vertex two-sphere triangulation with
		$\sigma_0 = 0$ and property $T_1$. \label{fig:lktight}}
\end{figure}

\medskip
\noindent
{\bf Case $n = 10$}: We need to consider the classification of nine-vertex 
two-sphere triangulations for which we have the following $\sigma_0$-values.

\begin{center}
  \begin{tabular}{|c|c|c|}
	\hline
	$\sigma_0$&$\approx \sigma_0$&\# two-sphere triangulations \\
	\hline
	\hline
$1/21$&$0.0476$&$1$\\
\hline
$2/21$&$0.0952$&$1$\\
\hline
$8/63$&$0.1269$&$1$\\
\hline
$23/126$&$0.1825$&$2$\\
\hline
$3/14$&$0.2142$&$1$\\
\hline
$2/9$&$0.2222$&$1$\\
\hline
$31/126$&$0.246$&$1$\\
\hline
$2/7$&$0.2857$&$7$\\
\hline
$5/14$&$0.3571$&$11$\\
\hline
$1/2$&$0.5$&$24$\\
	\hline
	\hline
	& Total:& $50$ \\
	\hline
  \end{tabular}
\end{center}

In particular the smallest $\sigma_0$-value is greater than zero and thus
Equation~(\ref{eq:bd}) can never hold. Hence no tight ten-vertex 
combinatorial three-manifold $M$ with $\beta_1(M,\mathbb{F})=1$ exist.

\subsubsection*{The case $\beta_1(M,\mathbb{F})=2$}

Again, we have to consider two cases.

\medskip
\noindent
{\bf Case $n=11$}: The classification of $11$-vertex combinatorial three-manifolds
tells us that there is no combinatorial three-manifold with $\leq 11$
vertices and first Betti number greater than one.

\medskip
\noindent
{\bf Case $n=12$}: There are $1249$ triangulations of the sphere with
$11$-vertices. A computer search testing all $1249$ triangulations of the two-sphere with $11$ vertices resulted
in $22$ triangulations with property $T_2$ having $18$ distinct $\sigma_0$ values. These
$18$ $\sigma_0$-values allow $29$ combinations of size $12$ with an average of $\beta_1 (M,\mathbb{F}) -1 = 1$, 
resulting in $50$ combinations of vertex links with this property. Amongst these $50$ combinations
of vertex links, $42$ have at least one vertex degree occurring an odd number of times. Such
a combination cannot be the set of vertex links of a closed combinatorial manifold since
pairs of vertex stars in the vertex links have to meet in an edge link of the surrounding manifold and
hence the number of vertices of a given degree over all vertex links of a triangulation must always be an even number.
This leaves us with eight potential combinations of vertex links consisting of $11$ distinct sphere triangulations
(see table below).

\begin{center}
  \begin{tabular}{|l|c|c|}
	\hline
	Isomorphism signature**&$\sigma_0$&degree sequence* \\
	\hline
	\hline
	\texttt{cdef.e.gbhag.haibibjbkbkbjbkbkhk}&$2254/1155$&$4^2 5^8 6^1$ \\
	\hline
	\texttt{cdef.e.gbhag.haibibjbkbjbjbkfjdk}&$2296/1155$&$4^3 5^6 6^2$ \\
	\hline 
	\texttt{cdef.e.fbgbhaibjbi.hajcjdkbkbkbk}&$2323/1155$&$4^3 5^6 6^2$ \\
	\hline 
	\texttt{cdef.e.fbgbgahbibjbkbj.iakbkeiej}&$2367/1155$&$4^4 5^4 6^3$ \\
	\hline 
	\texttt{cdef.e.fbgbhaibjbi.hajckbkakbkbk}&$2370/1155$&$4^4 5^5 6^1 7^1$ \\
	\hline 
	\texttt{cdef.e.fbgbgahbibhbjciajbkbkbkbk}&$2416/1155$&$4^4 5^4 6^3$ \\
	\hline
	\texttt{cdef.e.fbgbgahbibhbjckaj.kajejbk}&$2416/1155$&$4^4 5^5 6^1 7^1$ \\
	\hline 
	\texttt{cddeafbgaf.haibi.hajbjbjbkbkekbk}&$2422/1155$&$3^1 4^1 5^7 6^2$ \\
	\hline 
	\texttt{cdef.e.fbgbgahbhbibjbibkbjckakek}&$2448/1155$&$4^4 5^5 6^1 7^1$ \\
	\hline 
	\texttt{cddeafbgaf.haibi.hajbjbkbkbkbkbk}&$2454/1155$&$3^1 4^2 5^5 6^3$ \\
	\hline 
	\texttt{cddeafbgaf.gahbhbibibjbkbjbkbkek}&$2564/1155$&$3^1 4^2 5^5 6^3$ \\
	\hline
	\hline
  \end{tabular}
\end{center}

\small
\noindent
* The degree sequence $d_1^{e_1} d_2^{e_2} \ldots d_m^{e_m}$ of a two-sphere triangulation $S$
denotes that $S$ has $e_i$ vertices of degree $d_i$, $1 \leq i \leq m$.

\noindent
** The isomorphism signature of a combinatorial manifold uniquely determines its isomorphism
type, i.e., two combinatorial manifolds have equal isomorphism signature if and only if
they are isomorphic. The isomorphism signature given in this table coincides with the one used
by simpcomp \cite{simpcomp,simpcompISSAC,simpcompISSAC11}. Use the function
\texttt{SCFromIsoSig(...)} to generate the complexes. See the manual for details.

\normalsize
\smallskip
For the remaining eight combinations of twelve $11$-vertex two-sphere triangulations
we apply an exhaustive search for tight combinatorial three-manifolds having any of these 
combinations as their vertex links.
The search essentially fixes an ordering of the vertex links
using the fact that by the two-neighbourliness the intersection of any two pairs of vertex stars
contains an edge star of the three-manifold. Then it starts combining
vertex stars in this ordering looping over all matching pairs of vertices in two consecutive links
(i.e., vertices of equal degree in the two-sphere triangulation),
all rotations and reflections of a matching pair and all permutations of the remaining vertices.
In each step the complex is tested if {\em (i)} it is the sub-complex of a two-neighbourly
three-manifold and {\em (ii)} if it satisfies a generalised version of 
property $T_k$. Whenever a complex is valid an additional link is added to the existing combination. If 
it fails one of the tests the link added last is removed and the next option is tried. The algorithm 
in full detail is available from the author upon request.

The above search yielded zero tight combinatorial $12$-vertex three-manifolds and hence we have
the following.

\begin{theorem}
	There is no tight $12$-vertex combinatorial three-manifold.
\end{theorem}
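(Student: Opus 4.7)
The plan is to combine the structural necessary conditions established earlier in the paper with a tightly controlled, exhaustive combinatorial search. First I would invoke Theorem~\ref{thm:dimOdd}, which forces $n \leq 12$ once $\beta_1(M,\mathbb{F}) \leq 2$, and Proposition~\ref{prop:bd}, which says that in a $12$-vertex tight three-manifold $M$ with $\beta_1(M,\mathbb{F}) = 2$ the vertex links are $11$-vertex triangulated $2$-spheres whose $\sigma_0$-values average to $1$. Since every vertex link must occur as a link of a tight three-manifold with $n{+}1 = 12$ vertices, each of them must have Property $T_2$. Thus the first step is to take the classification of the $1249$ eleven-vertex two-sphere triangulations, compute $\sigma_0$ for each, and retain only those with Property $T_2$. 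From the table in the preceding discussion this leaves only $22$ candidate links, carrying $18$ distinct $\sigma_0$-values.

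Next I would enumerate all multisets of size $12$ drawn from these $22$ spheres whose $\sigma_0$-values average to $1$. This is a small integer-combination problem (essentially a subset-sum on denominators), which yields $29$ combinations at the level of $\sigma_0$-values and $50$ concrete combinations of actual sphere triangulations. I would then prune this list with the standard parity obstruction: in any closed combinatorial $3$-manifold, two vertex stars sharing an edge meet along a matching pair of vertices of equal degree in the two sphere links. Hence the multiset of vertex degrees aggregated over all vertex links must have every value occurring an even number of times. This eliminates $42$ of the $50$ candidates, leaving the eight combinations explicitly listed in the table.

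Only then would I run an exhaustive backtracking search for each of the remaining eight combinations. The search fixes an ordering of the twelve prescribed vertex links, and incrementally glues each new link to the partial complex by choosing a pair of vertices of equal degree in two consecutive links (matching up their vertex stars along the forced edge link), a rotation/reflection aligning the two stars, and a permutation of the remaining vertices. At each step I prune aggressively: the growing complex must extend to a $2$-neighbourly subcomplex of a three-manifold, and its partial link data must still satisfy a running analogue of Property $T_2$ (no forbidden independent sets or disconnecting six-sets appear in any completed link). A failed branch backtracks; a successful branch either completes into a closed $12$-vertex three-manifold or reaches a contradiction later.

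The main obstacle is keeping this backtracking tractable: without the $T_k$-based pruning and the parity filter, the raw space of gluings is astronomically large. The crucial observation making the argument work is that Property $T_k$ and the $\sigma_0$-average condition prune both the candidate link multisets (from $1249^{12}$ down to eight) and every partial gluing along the way. Once the search terminates with no valid completion, together with the earlier case $n = 11$ ruled out by the existing classification of three-manifolds on at most eleven vertices, the theorem follows.
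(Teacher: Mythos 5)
Your proposal reconstructs the paper's argument essentially verbatim: filter the $1249$ eleven-vertex two-sphere triangulations by Property $T_2$ down to $22$ candidates with $18$ distinct $\sigma_0$-values, enumerate the multisets of size $12$ averaging $\sigma_0 = 1$ ($29$ value-combinations, $50$ concrete combinations), discard the $42$ failing the even-degree-multiplicity parity obstruction, and run the same backtracking gluing search with two-neighbourliness and generalised $T_k$ pruning on the remaining eight, finding none. The only cosmetic point worth flagging is that it would strengthen the write-up to state explicitly why a tight $12$-vertex three-manifold is forced to have $\beta_1 = 2$ exactly (Theorem~\ref{thm:dimOdd} gives the lower bound $\beta_1 \geq 2$, while the vertex-number lower bound of \cite[Theorem 5]{Lutz08FVec3Mnf} rules out $\beta_1 \geq 3$ on $12$ vertices), but the paper itself leaves this implicit as well.
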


Corollary~\ref{kor:main} follows instantly from the above observations combined with Theorem~\ref{thm:dimOdd}.

\subsubsection*{The case $\beta_1(M,\mathbb{F}) = 3$}

For $\beta_1(M,\mathbb{F}) = 3$ any tight combinatorial three-manifold has to have either
$13$ or $14$ vertices. For $14$ vertices an analysis if the $\sigma_0$ values
of all $13$-vertex triangulations of the two-sphere results in a 
minimal $\sigma_0$-value of $26971/12870 \sim 2.09565 > \beta_1 (M,\mathbb{F}) -1$ and hence
no tight $14$-vertex combinatorial three-manifold with first Betti number three can exist.


The $13$-vertex case has to be left open at this point.
However, it follows from the above observation together 
with the Lutz-K\"uhnel conjecture \cite[Conjecture 1.3]{Kuehnel99CensusTight} 
and a statement conjectured in \cite[Conjecture 32]{Lutz05TrigMnfFewVertCombMnf}.

\begin{kor}
	\label{cor:three}
	The only tight combinatorial three-manifolds with $ \beta_1(M,\mathbb{F}) \leq 3$
	for any field $\mathbb{F}$ are the boundary of the four-simplex and the 
	nine-vertex triangulation of $S^1 \dtimes S^2$.
\end{kor}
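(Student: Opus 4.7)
The plan is to assemble the case analyses carried out in Section~\ref{sec:sigma} together with Theorem~\ref{thm:dimOdd}, which bounds the vertex number $n$ of any tight combinatorial three-manifold in terms of $\beta_1(M,\mathbb{F})$. The cases $\beta_1 \in \{0,1,2\}$ admit an unconditional treatment; the case $\beta_1 = 3$ has to rely on the two open conjectures cited in the paragraph preceding the corollary, which is exactly why the corollary is phrased as a consequence of the preceding analysis \emph{together with} those conjectures.

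For $\beta_1(M,\mathbb{F}) = 0$ tightness forces $3$-neighbourliness via the slicing argument already used above, and hence $M$ must be the boundary of the $4$-simplex. For $\beta_1 = 1$, Theorem~\ref{thm:dimOdd} restricts $n$ to $\{9,10\}$; the $\sigma_0$-tabulation combined with Proposition~\ref{prop:bd} and Property $T_1$ singles out, in the $n=9$ case, the unique vertex link of Figure~\ref{fig:lktight} and hence the nine-vertex Klein bottle $S^2 \dtimes S^1$, while every $9$-vertex two-sphere has $\sigma_0 > 0$ so that Equation~(\ref{eq:bd}) cannot be satisfied for $n=10$. For $\beta_1 = 2$, Theorem~\ref{thm:dimOdd} gives $n \in \{11,12\}$; the census of $11$-vertex combinatorial three-manifolds contains no candidate, and the exhaustive computer search on the eight admissible combinations of twelve vertex links (established just above via Property $T_2$ together with $\sigma_0$ and degree-sequence parity constraints) yields the Theorem that there is no tight $12$-vertex combinatorial three-manifold.

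For $\beta_1 = 3$, Theorem~\ref{thm:dimOdd} forces $n \in \{13,14\}$. The case $n=14$ is excluded directly: the minimum $\sigma_0$-value over all $13$-vertex two-sphere triangulations is $26971/12870 > 2 = \beta_1 - 1$, violating Proposition~\ref{prop:bd}. For $n=13$ the argument is conditional; here the Lutz-K\"uhnel conjecture is invoked to conclude that a hypothetical tight $13$-vertex triangulation would have to be strongly minimal, and then Conjecture~32 of \cite{Lutz05TrigMnfFewVertCombMnf} is invoked to rule out a strongly minimal $13$-vertex three-manifold with $\beta_1 = 3$, giving the contradiction.

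The main obstacle is precisely this remaining $(n,\beta_1)=(13,3)$ case. Closing it unconditionally would require either extending the combinatorial three-manifold census beyond $11$ vertices (currently out of reach) or a sharper link-based obstruction beyond Property~$T_k$; neither is available within the framework developed here, which is why the corollary is stated as a consequence of the preceding analysis together with the two cited conjectures.
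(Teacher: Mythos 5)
Your overall decomposition matches the paper's: unconditional treatment of $\beta_1 \le 2$ via Theorem~\ref{thm:dimOdd}, Proposition~\ref{prop:bd}, Property~$T_k$ and the $12$-vertex computer search, followed by a conditional argument for $\beta_1 = 3$ that correctly excludes $n = 14$ by the $\sigma_0$-minimum and leaves $n = 13$ to the two cited conjectures.

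However, your reconstruction of the $(n,\beta_1)=(13,3)$ step is not sound as written. You invoke the Lutz--K\"uhnel conjecture to conclude that a hypothetical tight $13$-vertex triangulation would be strongly minimal, and then assert that the conjecture from \cite{Lutz05TrigMnfFewVertCombMnf} ``rules out a strongly minimal $13$-vertex three-manifold with $\beta_1 = 3$.'' It does not: that conjecture only asserts that the topological types with $\beta_1 = 3$ admitting a $13$-vertex triangulation are exactly $(S^2 \times S^1)^{\# 3}$ and $(S^2 \dtimes S^1)^{\# 3}$, and both of these \emph{do} possess strongly minimal $13$-vertex triangulations --- namely the non-neighbourly ones provided unconditionally by \cite[Corollary 32]{Lutz08FVec3Mnf}. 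That unconditional fact is the pivot you are missing: a tight $13$-vertex triangulation is $2$-neighbourly and hence has $\binom{13}{2}$ edges, whereas a non-neighbourly $13$-vertex triangulation of the same manifold has strictly fewer, so a tight $13$-vertex triangulation of either candidate cannot be strongly minimal; only \emph{then} does the Lutz--K\"uhnel conjecture yield the contradiction. The correct order of invocation is therefore: the conjecture from \cite{Lutz05TrigMnfFewVertCombMnf} to restrict the topological type, then \cite[Corollary 32]{Lutz08FVec3Mnf} to defeat strong minimality of any $2$-neighbourly triangulation of those types, then \cite[Conjecture 1.3]{Kuehnel99CensusTight} to conclude. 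Your version omits the middle (unconditional) step and attributes to the Lutz conjecture a non-existence statement it does not actually make.
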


\begin{proof}
By the above, a tight combinatorial three-manifold $M$ with $\beta_1 (M,\mathbb{F}) = 3$
must have $13$ vertices. The most natural candidates for the topological type of such a tight 
combinatorial three-manifold are $(S^2 \times S^1)^{\# 3}$ and $(S^2 \dtimes S^1)^{\# 3}$. However, these
manifolds admit non-neighbourly $13$-vertex triangulations \cite[Corollary 32]{Lutz08FVec3Mnf} and thus 
a tight triangulation of these manifolds cannot be strongly minimal.
The statement now follows as a consequence of the Lutz-K\"uhnel conjecture 
\cite[Conjecture 1.3]{Kuehnel99CensusTight} and 
\cite[Conjecture 35]{Lutz05TrigMnfFewVertCombMnf} where it is conjectured that the only 
manifolds with $\beta_1 (M,\mathbb{F}) = 3$ admitting a $13$-vertex triangulation are
$(S^2 \times S^1)^{\# 3}$ and $(S^2 \dtimes S^1)^{\# 3}$.
\end{proof}

In particular, Corollary~\ref{cor:three} implies that modulo \cite[Conjecture 1.3]{Kuehnel99CensusTight} 
and \cite[Conjecture 32]{Lutz05TrigMnfFewVertCombMnf} there is no tight triangulation of the three-torus.


	{\footnotesize
	 \bibliographystyle{abbrv}
	 \bibliography{/home/jonathan/bibliography/bibliography}
	}

\end{document}